\swapnumbers \numberwithin{equation}{section}
\theoremstyle{plain}
\newtheorem{thm}{Theorem}[section]
\newtheorem{conjec}[thm]{Conjecture}
\newtheorem{prop}[thm]{Proposition}
\newtheorem{cor}[thm]{Corollary}
\theoremstyle{definition}
\newtheorem{defin}[thm]{Definition}
 \newcommand{\Wi}{\widetilde}
\def\scr{\mathcal}
\def\Z{{\mathbb Z}}
\def\Q{{\mathbb Q}}
\def\R{{\mathbb R}}
\def\N{{\mathbb N}}
\def\1{\hbox{\rm\rlap {1}\hskip.03in{\rom I}}}
\def\Bbbone{{\rm1\mathchoice{\kern-0.25em}{\kern-0.25em}
{\kern-0.2em}{\kern-0.2em}I}}
\long\def\forget#1\forgotten{} %
\newcommand\ver[1]{\marginpar{\tiny Changed in Ver \VER}}
\date{\today}
\begin{document}

\title[On macroscopic dimension of rationally essential
manifolds]{On macroscopic dimension of rationally essential
manifolds }

\author[A.~Dranishnikov]{Alexander  Dranishnikov$^{1}$} %
\thanks{$^{1}$Supported by NSF, grant DMS-0904278}

\address{Alexander N. Dranishnikov, Department of Mathematics, University
of Florida, 358 Little Hall, Gainesville, FL 32611-8105, USA}
\email{dranish@math.ufl.edu}

\subjclass[2010]
{Primary 53C23; 
Secondary 20J06, 55N91, 55M10  
57N65  
}

\keywords{}

\begin{abstract}
We construct a counterexamples in dimensions $n>3$ to Gromov's
conjecture \cite{Gr1} that the macroscopic dimension of rationally
essential $n$-dimensional manifolds equals $n$.
\end{abstract}

\maketitle \tableofcontents

\section {Introduction}

Gromov brought to life several definitions of a large scale
dimension. Perhaps the best known of these is the notion of the
asymptotic dimension introduced as an invariant of discrete groups
\cite{Gr3}. It turned out that the finiteness of  asymptotic
dimension for a group implies many famous conjectures of Novikov's
type for that group
\cite{Yu},\cite{Ba},\cite{CG},\cite{BR},\cite{DFW}. The asymptotic
dimension $asdim X$ is defined for general metric spaces $X$ and by
its nature does not take into account the local structure of a
space. The notion of  macroscopic dimension $\dim_{mc}X$ was
introduced by Gromov for studying Riemannian manifolds \cite{Gr1}.
This notion of large-scale dimension is sensitive to the local
structure in particular, to the covering dimension of a space which
by definition is local. We note that always $\dim_{mc}X\le asdim X$.

Gromov stated several conjectures and questions concerning
macroscopic dimension. One of his conjectures on $\dim_{mc}$ was
that the universal covering $\Wi M$ of any $n$-manifold $M$ with
positive scalar curvature satisfies the inequality $\dim_{mc}\Wi
M\le n-2$. This conjecture seems to be out of reach since it implies
the Gromov-Lawson conjecture about non-existence of a positive
scalar curvature metric on any closed aspherical manifold. The
latter is known to be a Novikov type conjecture. We refer to
\cite{BD} for recent progress on the Gromov scalar curvature
conjecture modulo the Novikov conjecture. In this paper we solve
another Gromov's problem which connects the macroscopic dimension of
the universal covering with the essentiality of the manifold.

\begin{defin}\cite{Gr1}
A metric space $X$ has the {\em macroscopic dimension} less or equal
to $k$, $\dim_{mc}X\le k$, if there is a continuous uniformly
cobounded map $f:X\to N^k$ to a $k$-dimensional simplicial complex.
\end{defin}
A map $f:X\to Y$ of a metric space is {\em uniformly cobounded} if
there is a constant $C>0$ such that $diam(f^{-1}(y))<C$ for all
$y\in Y$.

Clearly $\dim_{mc}$ is an invariant of quasi-isometry
homeomorphisms. Therefore, the macroscopic dimension $\dim_{mc}\Wi
M$ of the universal covering $\Wi M$ of a closed manifold $M$ taken
with the lifted from $M$ metric does not depend on the choice of the
metric on $M$. Gromov studied the question when the universal
covering $\Wi M$ of a closed $n$-dimensional manifold could have
macroscopic dimension strictly less than $n$.

The main source of $n$-manifolds satisfying the inequality
$\dim_{mc}\Wi M<n$  is the class of inessential manifolds also
introduced by Gromov \cite{Gr2}. We recall that an $n$-manifold $M$
is called {\em inessential} if a map $f:M\to B\pi$ that classifies
its universal covering $\Wi M$ can be deformed to the
$(n-1)$-skeleton $B\pi^{(n-1)}$. Otherwise it is called {\em
essential}. It is well-known that an orientable manifold is
inessential if and only if the image of its fundamental class under
the induced homomorphism $f_*:H_*(M)\to H_*(B\pi)$ is zero,
$f_*([M])=0$ (see~\cite{BD}). An example of an essential
$n$-manifold $M$ with $\dim_{mc}\Wi M<n$ is the real projective
space $\R P^n$. Though $M=\R P^n$ is  essential it is {\em
rationally inessential} for odd $n$, i.e., with $f_*([M])=0$ in
$H_*(B\pi;\Q)$. Gromov conjectured this  is always the case.
\begin{conjec}\label{GQ}(\cite{Gr1}, $2\frac{2}{3}$ Remarks and Questions)
If $\dim_{mc}\Wi M <n$ where $\Wi M$ is the universal covering of a
closed orientable $n$-manifold $M$, then $M$ must be rationally
inessential.
\end{conjec}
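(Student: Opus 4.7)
My plan is to attempt the contrapositive: assuming $M$ is rationally essential, show that $\dim_{mc}\widetilde M = n$. Rational essentiality provides a class $\alpha\in H^n(B\pi;\mathbb Q)$ with $\langle f^*\alpha,[M]\rangle\neq 0$, where $f\colon M\to B\pi$ classifies the universal cover. From any hypothetical uniformly cobounded map $g\colon\widetilde M\to N^{n-1}$ I would try to manufacture a rational factorization of $f$ through an $(n-1)$-dimensional complex, forcing $f^*\alpha=0$ in $H^n(M;\mathbb Q)$ and producing a contradiction.

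The technical step is to upgrade $g$ to a $\pi$-equivariant object without losing the dimension bound. Fixing $R$ larger than the uniform coboundedness constant $C$, one can map $\widetilde M$ into the nerve $Y$ of a suitable $\pi$-invariant covering by $R$-balls; this gives a $\pi$-equivariant map $\widetilde M\to Y$ which, combined with the pushforward of $g$, should produce an equivariant map into a target whose geometric dimension is at most $n-1$. Quotienting by $\pi$ would then give a factorization $M\to Y/\pi\to B\pi$ of the classifying map through an $(n-1)$-dimensional complex, so $f_*[M]=0$ in $H_n(B\pi;\mathbb Z)$, and \emph{a fortiori} in $H_n(B\pi;\mathbb Q)$.

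The serious obstacle is in preserving equivariance \emph{and} the dimension bound simultaneously. Uniform coboundedness controls diameters of fibers of $g$, but individual translates $\gamma\cdot g$ need not stay close to $g$, so any honest symmetrization tends to inflate the dimension of the target, typically to $n$ or beyond. Passing to rational coefficients — for instance by replacing the target with a rational Postnikov truncation or a Sullivan model — kills torsion obstructions but in general preserves the rational top-degree obstruction $f^*\alpha$. I suspect this is precisely the gap the paper exploits: in dimensions $n\ge 4$ there should exist manifolds whose universal covers \emph{do} admit uniformly cobounded maps into some $(n-1)$-complex, yet whose classifying maps carry a nonzero rational top-dimensional class, so that the naive implication ``small fibers $\Rightarrow$ small rational image of $[M]$'' fails in an essential way.
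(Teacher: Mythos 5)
You are being asked to ``prove'' a statement that the paper in fact \emph{disproves}: Conjecture~\ref{GQ} is Gromov's conjecture, and the entire point of the paper is to construct counterexamples to it for $n\ge 4$ (Theorem~\ref{main*}). So no correct proof exists, and your closing paragraph --- where you conclude that the gap you located is ``precisely the gap the paper exploits'' --- is the right answer, not a defect in your attempt. Your diagnosis of the obstacle is accurate: a uniformly cobounded map $g\colon\widetilde M\to N^{n-1}$ has no reason to be close to its $\pi$-translates, and any attempt to symmetrize it (via nerves of invariant covers, rationalization, etc.) typically inflates the target dimension back to $n$; there is no functor that turns ``cobounded with small fibers'' into ``$\pi$-equivariant into an $(n-1)$-complex'' in general. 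That failure is exactly what the paper quantifies.

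Concretely, the paper shows that the correct obstruction to $\dim_{MC}\widetilde M<n$ is not $f_*[M]\in H_n(B\pi;\Q)$ but its image under a \emph{perturbation homomorphism} $pert_*^\pi\colon H_n(B\pi;\Z)\to H_n^{lf,ae}(E\pi;\Z)$ into almost-equivariant (equivalently, uniformly finite) homology (Theorem~\ref{small}). This map can kill nonzero rational classes: by Block--Weinberger (Theorem~\ref{BW}), $H_0^{uf}(E\pi;\Z)=0$ exactly when $\pi$ is non-amenable. Taking $\pi=\Z^n\times F_2$ and $M$ obtained by low-dimensional surgery on $T^n\hookrightarrow T^n\times(S^1\vee S^1)=B\pi$, one has $f_*[M]=[T^n]\otimes 1\neq 0$ rationally, yet $pert_*^\pi([T^n]\otimes 1)=pert_*^{\Z^n}[T^n]\otimes pert_*^{F_2}(1)=0$ because $F_2$ is non-amenable; hence $\dim_{MC}\widetilde M<n$ and a fortiori $\dim_{mc}\widetilde M<n$, while $M$ is rationally essential. (And by Theorem~\ref{dr}, non-amenability is indispensable here: for amenable $\pi$ the conjecture holds.) So your instinct that rationalization ``preserves the rational top-degree obstruction'' is precisely what breaks: after passing to almost-equivariant chains, the nonzero rational class becomes invisible when the group is non-amenable.
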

The main goal of this paper to give  counterexamples to Gromov's
conjecture. For that we define a new notion of macroscopic dimension
$\dim_{MC}$ satisfying the inequality
$$
\dim_{mc}X\le\dim_{MC}X\le asdim X$$ and construct  rationally
essential closed $n$-manifolds $M$ with $\dim_{MC}\Wi M<n$.
\begin{defin}
A metric space $X$ has the {\em macroscopic dimension} less than or
equal to $k$, $\dim_{MC}X\le k$, if there is a Lipschitz uniformly
cobounded map $f:X\to N^k$ to a $k$-dimensional simplicial complex.
\end{defin}
Here we assume that a simplicial complex $N$ has a metric inherited
from the Hilbert space $\ell_2(N^{(0)})$ spanned by the vertices of
$N$ under the canonical imbedding into the standard simplex
$N\subset\Delta\subset\ell_2(N^{(0)})$. We will call $\dim_{MC}$ the
macroscopic dimension and will refer to $\dim_{mc}$ as  Gromov's
macroscopic dimension. Clearly,
$$
\dim_{mc}X\le\dim_{MC}X.$$ The original definition of asymptotic
dimension uses coverings by large open sets. Alternatively, the
asymptotic dimension can be defined as follows \cite{Gr3},\cite{BD}:
\begin{defin}
A metric space $X$ has  {\em asymptotic dimension} less than or
equal to $k$, $asdim X\le k$, if for every $\epsilon>0$ there is an
$\epsilon$-Lipschitz uniformly cobounded map $f:X\to N^k$ to a
$k$-dimensional simplicial complex.
\end{defin}
Then, clearly,
$$\dim_{MC}X\le asdim X.$$

In this paper we develop a cohomological approach to  macroscopic
dimension outlined in \cite{Dr}. This theory combined with a
homological characterization of amenability given by Block and
Weinberger~\cite{BW} produces the following examples.
\begin{thm}\label{main*}
For all $n\ge 4$ there are closed rationally essential $n$-manifolds
$M$ with  $\dim_{MC}\Wi M<n$.
\end{thm}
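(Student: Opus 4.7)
The plan is to combine the cohomological obstruction theory for $\dim_{MC}$ developed in \cite{Dr} with the Block--Weinberger amenability criterion \cite{BW}, and then to exhibit a very explicit product manifold satisfying the conclusion.

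First I would establish a cohomological obstruction: for a closed orientable $n$-manifold $M$, the macroscopic inequality $\dim_{MC}\Wi M<n$ is governed by a primary obstruction built out of a $\pi_1(M)$-equivariant bounded-geometry triangulation of $\Wi M$. One extends a putative Lipschitz cobounded map skeleton-by-skeleton, and the obstruction to filling in the top cells is exactly the uniformly finite fundamental class $[\Wi M]\in H^{uf}_n(\Wi M;\R)$. Consequently, vanishing of $[\Wi M]\in H^{uf}_n(\Wi M;\R)$ forces $\dim_{MC}\Wi M\le n-1$.

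Next I would invoke the Block--Weinberger theorem \cite{BW}: a bounded-geometry uniformly discrete space (equivalently, via Milnor--\v{S}varc, the universal cover $\Wi M$) is non-amenable if and only if its uniformly finite fundamental class vanishes. Hence, whenever $\pi_1(M)$ is non-amenable, the obstruction of the previous step vanishes and $\dim_{MC}\Wi M<n$ automatically.

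For the explicit example, for each $n\ge 4$ I would take
$$
M=\Sigma_g\times T^{n-2},
$$
where $\Sigma_g$ is a closed orientable surface of genus $g\ge 2$ and $T^{n-2}$ is the $(n-2)$-torus. The group $\pi_1(M)=\pi_1(\Sigma_g)\times\Z^{n-2}$ contains a non-abelian free subgroup and so is non-amenable; by the previous two steps this gives $\dim_{MC}\Wi M<n$. On the other hand $M$ is aspherical, so the classifying map $f\colon M\to B\pi_1(M)$ is a homotopy equivalence, hence $f_*[M]\ne 0$ in $H_n(B\pi_1(M);\Q)$, i.e.\ $M$ is rationally essential.

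The main obstacle is the first step: turning a vanishing of $[\Wi M]$ in $H^{uf}_n$ into an honest Lipschitz uniformly cobounded map to an $(n-1)$-complex. One must implement the cochain that witnesses the vanishing of the obstruction by a Lipschitz (not merely continuous) correction, keeping Lipschitz and cobounding constants uniform across the whole universal cover. This is the heart of the cohomological theory of \cite{Dr} and is precisely where the distinction between $\dim_{MC}$ and Gromov's $\dim_{mc}$ enters.
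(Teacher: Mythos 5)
Your plan correctly identifies the two main tools (the cohomological obstruction theory for $\dim_{MC}$ from \cite{Dr} and the Block--Weinberger amenability criterion \cite{BW}), but the way you combine them contains a fatal misidentification of the obstruction, and the explicit example you propose is in fact \emph{not} a counterexample.

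The crucial error is in the first step. You assert that the obstruction to $\dim_{MC}\Wi M<n$ is the uniformly finite fundamental class $[\Wi M]\in H^{uf}_n(\Wi M;\R)$, and that its vanishing is equivalent to non-amenability of $\pi_1(M)$. Neither claim is correct. The paper proves (via the Poincar\'e duality for almost equivariant chains, Proposition~2.7) that the perturbation map $pert_*^M:H_n(M;\Z)\to H_n^{lf,ae}(\Wi M;\Z)=H_n^{uf}(\Wi M;\Z)$ is dual to $pert^*_M:H^0(M;\Z)\to H^0_{ae}(\Wi M;\Z)$, which is an isomorphism $\Z\to\Z$; so $[\Wi M]=pert_*^M([M])$ is \emph{never} zero in $H_n^{uf}(\Wi M;\Z)$, regardless of amenability. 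Block--Weinberger's criterion is a degree-$0$ statement (about $H_0^{uf}$), not a degree-$n$ one. The correct obstruction, as established in Theorem~4.3 of the paper, is that $f_*([M])$ must lie in the kernel of $pert_*^{\pi}:H_n(B\pi;\Z)\to H_n^{uf}(E\pi;\Z)$ --- the perturbed class in the universal cover $E\pi$ of the \emph{classifying space}, not in $\Wi M$. For aspherical $M$ these coincide, and then the duality argument above shows $f_*([M])\notin\ker(pert_*^{\pi})$, i.e. $\dim_{MC}\Wi M=n$. Hence your example $M=\Sigma_g\times T^{n-2}$ (which is closed and aspherical) satisfies $\dim_{MC}\Wi M=n$, not $<n$; it is not a counterexample. (If your reasoning were correct, any closed hyperbolic manifold would already be a counterexample, trivializing the whole problem.)

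The paper avoids this trap precisely by using a \emph{non-aspherical} manifold whose classifying space has dimension strictly greater than $n$. It takes $\pi=\Z^n\times F_2$, so that $B\pi=T^n\times(S^1\vee S^1)$ has dimension $n+1$, and realizes the class $[T^n]\otimes 1\in H_n(B\pi)$ by a manifold $M$ obtained from $T^n$ by surgery in dimensions $1$ and $2$. The tensor-product formula for perturbation homomorphisms (Proposition~2.6) gives $pert_*^{\pi}([T^n]\otimes 1)=pert_*^{\Z^n}([T^n])\otimes pert_*^{F_2}(1)$, and the \emph{degree-zero} Block--Weinberger theorem kills the second factor because $F_2$ is non-amenable: $pert_*^{F_2}(1)=0$ in $H_0^{uf}(EF_2;\Z)$. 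This is the ingredient your proposal is missing: you need the non-amenable factor to contribute a \emph{degree-zero} class dying in $H_0^{uf}$, which forces $B\pi$ to be higher-dimensional than $M$. An aspherical $M$ offers no such room, since then $B\pi=M$ has exactly dimension $n$ and Poincar\'e duality pins the fundamental class.
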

The non-amenability of the fundamental group in these examples is
essential in view of the following
\begin{thm}\cite{Dr}\label{dr}
For rationally essential $n$-manifolds with amenable fundamental
group there is the equality $\dim_{MC}\Wi M=n$.
\end{thm}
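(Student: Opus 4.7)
The bound $\dim_{MC}\Wi M \le n$ is automatic: lifting a smooth triangulation of $M$ to a $\pi$-equivariant simplicial structure on $\Wi M$ makes the identity map into a $1$-Lipschitz uniformly cobounded map to an $n$-complex. The content is the reverse inequality, which I would prove by contradiction: assume there exists a Lipschitz uniformly cobounded map $f\colon \Wi M\to N$ with $\dim N\le n-1$, and derive a contradiction from rational essentiality together with amenability.

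The strategy is to produce a non-vanishing class in the top-degree uniformly finite cohomology $H^{n}_{uf}(\Wi M;\R)$ and then invoke a vanishing principle for this group in the presence of $f$. Let $\phi\colon M\to B\pi$ classify the universal cover and $\tilde\phi\colon \Wi M\to E\pi$ be its equivariant lift. Rational essentiality furnishes $\alpha\in H^{n}(B\pi;\Q)$ with $\langle\alpha,\phi_{\ast}[M]\rangle\ne 0$. Pick a cellular cocycle $c_{\alpha}$ representing $\alpha$ and set $\tilde c_{\alpha}:=\tilde\phi^{\ast}c_{\alpha}$. Although $[\tilde c_{\alpha}]=0$ in ordinary cohomology (since $E\pi$ is contractible), compactness of $M$ forces its values on simplices to be uniformly bounded, so $\tilde c_{\alpha}$ represents a class in $H^{n}_{uf}(\Wi M;\R)$.

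Non-vanishing of $[\tilde c_{\alpha}]$ comes from the cap product with the uniformly finite fundamental class: $[\Wi M]^{uf}\frown[\tilde c_{\alpha}]$ is a non-zero rational multiple of the ``everywhere-$1$'' $0$-cycle $[\mathrm{pt}]^{uf}\in H^{uf}_{0}(\Wi M;\R)$, as can be checked by pushing the cap product down to $M$, where it equals $\langle\alpha,\phi_{\ast}[M]\rangle\cdot[\mathrm{pt}]\in H_{0}(M;\R)$. By the Block--Weinberger characterization \cite{BW}, amenability of $\pi$ is precisely equivalent to $[\mathrm{pt}]^{uf}\ne 0$, so $[\tilde c_{\alpha}]\ne 0$ in $H^{n}_{uf}(\Wi M;\R)$.

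For the vanishing side, the pullback cover $\mathcal{U}=\{f^{-1}(\mathrm{st}\,v):v\in N^{(0)}\}$ consists of uniformly bounded sets (by the uniform coboundedness of $f$) whose nerve embeds in $N$ and so has dimension at most $n-1$. A \v{C}ech-to-uniformly-finite-cohomology spectral sequence then forces $H^{n}_{uf}(\Wi M;\R)=0$, contradicting the non-vanishing above and completing the proof. The principal technical obstacle is rigorously setting up this spectral sequence inside the uniformly finite category: one needs the Lipschitz (not merely continuous) hypothesis on $f$ to extract a uniformly locally finite refinement of $\mathcal{U}$ and to control the bounded geometry of the local coefficient system, so that the standard nerve argument carries over with uniformly finite chains in place of singular ones.
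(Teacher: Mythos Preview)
Your non-vanishing argument (steps 3--6) is essentially the right idea and parallels the approach underlying the paper's machinery: rational essentiality provides a class $\alpha$ pairing nontrivially with $\phi_*[M]$, the cap product of the lifted class with the uniformly finite fundamental class of $\Wi M$ is a nonzero multiple of the ``everywhere-$1$'' $0$-cycle, and Block--Weinberger (Theorem~\ref{BW}, via Remark~1 identifying $H_0^{uf}$ with $H_0^{lf,ae}$ for trivial coefficients) then says this class survives precisely when $\pi$ is amenable.

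The genuine gap is step 7. The assertion that a \v{C}ech-type spectral sequence for the cover $\mathcal U=\{f^{-1}(\mathrm{st}\,v)\}$ forces $H^n_{uf}(\Wi M;\R)=0$ does not follow from the nerve having dimension $\le n-1$. Even granting that such a spectral sequence can be set up in the uniformly finite category (which you acknowledge is nontrivial), the pieces $f^{-1}(\mathrm{st}\,v)$ are bounded but not acyclic: as open subsets of an $n$-manifold they can carry ordinary cohomology in degrees $1,\dots,n-1$, so the columns $E_2^{p,q}$ with $p\le n-1$ and $q\ge 1$ need not vanish and can contribute to total degree $n$. Thus nerve dimension alone does not kill $H^n_{uf}$ in degree $n$, and you have no mechanism to kill the specific class $[\tilde c_\alpha]$ either.

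The paper's route to the vanishing is different and avoids any spectral sequence. The point is that one should work with the classifying map $\Wi f:\Wi M\to E\pi$ rather than with the abstract target $N$. Proposition~\ref{sweep} shows that the hypothesis $\dim_{MC}\Wi M<n$ lets one build, from the map to $N$, a Lipschitz map $q:N\to E\pi^{(n-1)}$ and then a Lipschitz cellular homotopy of $\Wi f$ to a map landing in $E\pi^{(n-1)}$. By Proposition~\ref{uniform=almost} this homotopy is almost equivariant, so by Proposition~\ref{induced2} (and its homological analogue) the induced map on almost equivariant (co)homology factors through the $(n-1)$-skeleton; hence every degree-$n$ class pulled back from $E\pi$ dies. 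This is packaged in Theorem~\ref{small} as the equivalence $\dim_{MC}\Wi M<n \iff f_*[M]\in\ker(pert_*^{\pi})$. The contradiction then comes by capping with $\alpha$ and applying $pert_*^{\pi}$ in degree $0$, exactly as in your non-vanishing half.

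In short: keep your Block--Weinberger/cap-product argument, but replace the spectral sequence by the sweep-out into $E\pi^{(n-1)}$ (Proposition~\ref{sweep}). That is the missing idea.
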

It turns out that the inequality $\dim_{MC}\Wi M< n$ depends only on
the homology class $\alpha=f_*([M])\in H_n(B\pi)$ where $f:M\to
B\pi$ is a map that classifies the universal covering of $M$. It
means that for any manifold $N$ with classifying map $g:N\to B\pi$
and $g_*([N])=\alpha$ it follows that $\dim_{MC}\Wi N<n$. Following
Brunnbauer and Hanke \cite{BH} we call such classes {\em small}. It
turns out that small classes form a subgroup in $H_n(B\pi)$. This
phenomenon was discovered first in \cite{BH} with respect to many
classes of so-called large manifolds such as manifolds with
hyper-spherical universal cover, with hyper-euclidean universal
cover, enlargeable, etc.  The property $\dim_{MC}\Wi M=\dim M$ for a
manifold $M$ certainly represents some largeness. We call such
manifolds $M$ {\em macroscopic dimension large}. Nevertheless
Brunnbauer-Hanke approach does not work here. In particular, for all
largeness conditions treated in \cite{BH} the torsion elements are
small. It is still an open question whether torsion elements of
$H_*(B\pi)$ are small for the macroscopic dimension large property.
Our proof that small classes in $H_*(B\pi)$ for the macroscopic
dimension largeness  form a subgroup is based on a concept of the
almost equivariant (co)homology introduced in this paper. Curiously,
this proof brought as the byproduct the following formula for
homology of groups:
$$
H_n(B\pi)=ker\{I^n\otimes_{\pi}\Z\stackrel{(1\otimes j)\otimes
1}\longrightarrow (I^{n-1}\otimes\Z\pi)\otimes_{\pi}\Z\}$$ where
$j:I\to\Z\pi$ is the inclusion of the augmentation ideal $I$ into
the group ring $\Z\pi$ and $I^n=I\otimes\dots\otimes I$ is the $n$th
tensor power over $\Z$.

\

We note that all known results on Gromov's macroscopic dimension
\cite{Gr1},\cite{B1},\cite{B2},\cite{B3},\cite{BD} hold true for
$\dim_{MC}$. It could be the case that there is the equality
$\dim_{MC}X=\dim_{mc}X$ for nice metric spaces $X$ such as the
universal coverings of closed manifolds with the lifted metric.

\section{Almost equivariant cohomology}

Let $X$ be a CW complex and let $E_n(X)$ denote the set of its
$n$-dimensional cells. We recall that (co)homology of a CW complex
$X$ with coefficients in an abelian group $G$ are defined by by
means of the cellular chain complex $C_*(X)=\{C_n(X),\partial_n\}$
where $C_n(X)$ is the free abelian group generated by the set
$E_n(X)$. The resulting groups $H_*(X;G)$ and $H^*(X;G)$ do not
depend on the choice of the CW structure on $X$. The proof of this
fact appeals to the singular (co)homology theory and it is a part of
all textbooks on algebraic topology. The same holds true for
(co)homology groups with locally finite coefficients, i.e., for
coefficients in a $\pi$-module $L$ where $\pi=\pi_1(X)$. The chain
complex defining the homology groups $H_*(X;L)$ is $\{C_n(\Wi
X)\otimes_{\pi}L\}$ and the cochain complex defining the cohomology
$H^*(X;L)$ is $Hom_{\pi}(C_n(\Wi X),L)$ where $\Wi X$ is the
universal cover of $X$ with the cellular structure induced from $X$.
The resulting groups $H_*(X;L)$ and $H^*(X;L)$ do not depends on the
CW structure on $X$.

These groups can be interpreted as the equivariant (co)homology:
$$H_*(X;L)=H_*^{lf,\pi}(\Wi X;L)\ \ \ and\ \ \ H^*(X;L)=H_{\pi}^*(\Wi X;L).$$
The last equality is obvious since the equivariant cohomology groups
$H_{\pi}^*(\Wi X;L)$ are defined by equivariant cochains
$Hom_{\pi}(C_n(\Wi X),L\}$. We recall that the equivariant locally
finite homology groups are defined by the complex of infinite {\em
locally finite invariant chains}
$$ C_n^{lf,\pi}(\Wi X;L)=\{\sum_{e\in E_n(\Wi X)} \lambda_ee\mid
\lambda_{ge}=g\lambda_e,\ \lambda_e\in L\}.$$ The local finiteness
condition on a chain requires that for every $x\in\Wi X$ there is a
neighborhood such that the number of $n$-cells $e$ intersecting $U$
for which $\lambda_e\ne 0$ is finite. This condition is satisfied
automatically when $X$ is a locally finite complex. Even in that
case {\em lf} is the part the notation for the equivariant homology
since it was inherited from the singular theory. The following
Proposition implies the equality $H_*(X;L)=H_*^{lf,\pi}(\Wi X;L)$.
\begin{prop}
For every CW complex $X$ with the fundamental group $\pi$ and a
$\pi$-module $L$ the chain complex $\{C_n(\Wi X)\otimes_{\pi}L\}$ is
isomorphic to the chain complex of locally finite equivariant chains
$C_*^{\pi}(\Wi X)$.
\end{prop}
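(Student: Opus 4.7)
The plan is to construct an explicit chain isomorphism using the fact that $C_n(\Wi X)$ is a free left $\Z[\pi]$-module. Since $\pi$ acts on $\Wi X$ as deck transformations of the universal covering, it permutes the $n$-cells freely. Choosing a set $R_n\subset E_n(\Wi X)$ of orbit representatives (one for each $n$-cell of $X$), every cell has a unique expression $g\cdot\tilde e$ with $g\in\pi$ and $\tilde e\in R_n$, and consequently
\[
C_n(\Wi X) \;=\; \bigoplus_{\tilde e\in R_n}\Z[\pi]\cdot\tilde e,
\qquad
C_n(\Wi X)\otimes_\pi L \;=\; \bigoplus_{\tilde e\in R_n} L.
\]

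Next I would define the forward map $\phi\colon C_n(\Wi X)\otimes_\pi L\to C_n^{lf,\pi}(\Wi X;L)$ on a generator by
\[
\phi(\tilde e\otimes\ell) \;=\; \sum_{g\in\pi}(g\ell)\,(g\tilde e),
\]
extended $\Z$-linearly. By construction the output is $\pi$-equivariant, and local finiteness holds because a single $\pi$-orbit is a discrete subset of $\Wi X$ (the covering action being properly discontinuous) and $\phi(\tilde e\otimes\ell)$ is supported on just one orbit. Well-definedness on the tensor product amounts to checking the standard relation $g\tilde e\otimes\ell=\tilde e\otimes g^{-1}\ell$: both sides produce the equivariant chain whose coefficient at $h\tilde e$ is $hg^{-1}\ell$. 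The inverse $\psi(\lambda)=\sum_{\tilde e\in R_n}\tilde e\otimes\lambda_{\tilde e}$ simply reads off the coefficients on $R_n$; equivariance recovers $\lambda$ completely from these values, yielding $\phi\circ\psi=\mathrm{id}$ and $\psi\circ\phi=\mathrm{id}$.

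Finally, I would verify that $\phi$ commutes with the differentials. Since the cellular boundary is $\pi$-equivariant, $\partial(g\tilde e)=g\,\partial\tilde e$, and a direct calculation gives
\[
\partial\phi(\tilde e\otimes\ell) \;=\; \sum_{g\in\pi}(g\ell)\,g(\partial\tilde e) \;=\; \phi\bigl((\partial\otimes 1)(\tilde e\otimes\ell)\bigr),
\]
so $\phi$ is a chain map and hence a chain isomorphism. The main delicate point is the bookkeeping between the finite direct sum on the algebraic side and the possibly infinite equivariant chains on the geometric side: one must verify that an equivariant chain is completely determined by its values on $R_n$ and that the local finiteness of $\lambda$ produces a genuinely finite element of $\bigoplus_{\tilde e\in R_n}L$. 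This is handled by projecting through the covering $p\colon\Wi X\to X$: a trivializing neighborhood $U$ of $\tilde x\in\Wi X$ maps homeomorphically to $p(U)$, so local finiteness of $\lambda$ at $\tilde x$ bounds the number of $\pi$-orbits meeting $p(U)$, and proper discontinuity then rules out any accumulation of supporting orbits. Once this point is in hand, the proposition follows formally from the freeness of $C_n(\Wi X)$ as a $\Z[\pi]$-module.
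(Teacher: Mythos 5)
Your overall approach---identifying $C_n(\Wi X)$ as a free $\Z[\pi]$-module on orbit representatives $R_n$ and writing down the evident equivariant extension map $\phi$---is the natural one, and the paper in fact supplies no proof of this proposition, so there is no competing argument to compare against. The construction of $\phi$, its well-definedness on $C_n(\Wi X)\otimes_\pi L=\bigoplus_{R_n}L$, and the check that it is a chain map are all fine.

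The gap is in your final paragraph, where you argue that every locally finite equivariant chain $\lambda$ is supported on only finitely many $\pi$-orbits and hence lies in the image of $\phi$. Local finiteness at a point $\tilde x$ says only that some neighborhood of $\tilde x$ meets finitely many cells carrying a nonzero coefficient; it gives no bound on the total number of supporting orbits. If $X$ has infinitely many $n$-cells (but is, say, locally finite), the chain $\lambda=\sum_{e\in E_n(\Wi X)}1\cdot e$ with trivial $\Z$ coefficients is equivariant and automatically locally finite, yet it is supported on every orbit, so $\psi(\lambda)$ would be an infinite sum --- not an element of $\bigoplus_{R_n}\Z$. Concretely, for $X=\R$ with vertices at the integers and trivial $\pi$, one has $C_1(\Wi X)\otimes_\pi\Z=\bigoplus_\Z\Z$ while $C_1^{lf,\pi}(\Wi X;\Z)=\prod_\Z\Z$, and the homologies differ too: $H_1(\R)=0$ but $H_1^{lf}(\R;\Z)=\Z$. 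So the proposition needs the hypothesis that $X$ has finitely many cells in each dimension (a hypothesis the paper omits but always satisfies in its applications, where $X$ is a finite complex). Under that hypothesis $R_n$ is finite, $\bigoplus_{R_n}L=\prod_{R_n}L$, and your argument closes; you should invoke this finiteness explicitly rather than try to upgrade the local finiteness of $\lambda$ to global finiteness of its supporting orbits, which is simply false.
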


The following definition was given in \cite{Dr} in more general
setting.

\begin{defin} Let $X$ be a CW complex with the universal cover $\Wi X$
and let $L$ be a $\pi$-module. A homomorphism $\phi:C_n(\Wi X)\to L$
is called {\em almost equivariant}, if the set
$$\{\gamma^{-1}\phi(\gamma e)\mid \gamma\in\pi\}\subset L$$ is finite
for every $n$-cell $e$ in $\Wi X$. Let $Hom_{ae}(C_n(\Wi X),L)$ be
the set of all almost equivariant homomorphisms from $C_n(\Wi X)$ to
$L$. Note that this is a group. These groups form a cochain complex
with respect to the co-differential $\delta$ defined as $(\delta
f)(\Delta)=f(\partial\Delta)$ where $\partial: C_n(\Wi X)\to
C_{n-1}(\Wi X)$ are the boundary homomorphisms. The cohomology
groups $H^*_{ae}(\Wi X;L)$ are called the {\em almost equivariant
cohomology} of $\Wi X$ with coefficients in a $\pi$-module $L$.
\end{defin}

One can define singular almost equivariant cohomology by replacing
$n$-cells $e$ in the above definition by  singular simplices
$\sigma:\Delta^n\to \Wi X$. The standard argument show that the
singular version of almost equivariant cohomology coincides with the
cellular. Thus the group $H^*_{ae}(\Wi X;L)$ does not depend on the
choice of a CW complex structure on $X$.

Since every equivariant homomorphism is almost equivariant, there is
a natural transformation
$$pert_X^*:H^*(X;L)=H^*_{\pi}(\Wi X;L)\to H^*_{ae}(\Wi X;L)$$ called
a {\em perturbation homomorphism} from the cohomology of $X$ to the
almost equivariant cohomology. Clearly, for complexes $X$ with
finite fundamental group, $pert_X^*$ is an isomorphism.

Also we note that a proper cellular map $f:X\to Y$ that induces an
isomorphism of the fundamental groups lifts to a proper cellular map
of the universal covering spaces $\bar f:\Wi X\to\Wi Y$. The lifting
$\bar f$ defines a chain homomorphism $\bar f_*:C_n(\Wi X)\to
C_n(\Wi Y)$ and a cochain homomorphism $\bar f^*:Hom_{ae}(C_n(\Wi
Y),L)\to Hom_{ae}(C_n(\Wi X),L)$. The latter defines a homomorphisms
of the almost equivariant cohomology groups
$$\bar f^*_{ae}:H^*_{ae}(\Wi Y;L)\to H^*_{ae}(\Wi X;L).$$

Suppose that $\pi$ acts freely on CW complexes $\Wi X$ and $\Wi Y$
such that the actions preserve the CW complex structures. We call a
cellular map $g:\Wi X\to \Wi Y$ {\em almost equivariant} if the set
$$\bigcup_{\gamma\in\pi}\{\gamma^{-1}g_*(\gamma e)\}\subset C_*(\Wi Y)$$ is finite
for every cell $e$ in $\Wi X$ where $g_*:C_*(\Wi X)\to C_*(\Wi Y)$
the induced chain map.

On a locally finite simplicial complex we consider the geodesic
metric in which every simplex is isometric to the standard.
\begin{prop}\label{induced1}
Let $f:X\to Y$ be a proper almost equivariant cellular map. Then the
induced homomorphism on cochains takes the almost equivariant
cochains to almost equivariant.
\end{prop}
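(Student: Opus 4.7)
The plan is to unpack both almost-equivariance hypotheses at the level of a single cell and show that the finiteness of the two relevant orbits combines in a straightforward way. Fix an $n$-cell $e$ in $\Wi X$ and an almost equivariant cochain $\phi \in Hom_{ae}(C_n(\Wi Y),L)$. By definition $(f^*\phi)(e) = \phi(f_*(e))$, so what must be proved is that the set
$$T_e = \{\gamma^{-1}\phi(f_*(\gamma e)) \mid \gamma \in \pi\} \subset L$$
is finite.

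First I would use the almost equivariance of $f$. By hypothesis the set $S_e = \{\gamma^{-1}f_*(\gamma e) \mid \gamma\in\pi\}$ is a finite subset of $C_n(\Wi Y)$; list its elements as $c^{(1)},\ldots,c^{(k)}$. For each $\gamma$ we may write $f_*(\gamma e) = \gamma \cdot c_\gamma$ where $c_\gamma \in S_e$. Each $c^{(j)}$ is an integer chain $c^{(j)} = \sum_i n_i^{(j)} e_i^{(j)}$ supported on a finite set of $n$-cells $e_i^{(j)}$ in $\Wi Y$. Using $\pi$-linearity of $f_*$ (and of $\phi$, which is just an abelian group homomorphism) one obtains
$$\gamma^{-1}\phi(f_*(\gamma e)) \;=\; \gamma^{-1}\phi(\gamma c_\gamma) \;=\; \sum_i n_i^{(j(\gamma))} \,\gamma^{-1}\phi\bigl(\gamma e_i^{(j(\gamma))}\bigr),$$
where $j(\gamma)\in\{1,\ldots,k\}$ is the index with $c_\gamma = c^{(j(\gamma))}$.

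Now I invoke almost equivariance of $\phi$: for each of the finitely many cells $e_i^{(j)}$ of $\Wi Y$ occurring in $c^{(1)},\ldots,c^{(k)}$, the set $\{\gamma^{-1}\phi(\gamma e_i^{(j)}) \mid \gamma \in \pi\}$ is finite. Hence the integer combinations $\sum_i n_i^{(j)}\gamma^{-1}\phi(\gamma e_i^{(j)})$ take only finitely many values in $L$ as $\gamma$ varies, for each fixed $j$. Taking the union over the finitely many indices $j=1,\ldots,k$ yields that $T_e$ is finite, as required.

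I do not anticipate a serious obstacle: the argument is a bookkeeping exercise combining two finiteness statements. The only point requiring care is to notice that the chain $c_\gamma$ itself varies with $\gamma$ but ranges over a finite set, so one must first discretize by the index $j(\gamma)$ and only then apply almost equivariance of $\phi$ to each of the finitely many cells $e_i^{(j)}$ appearing in the supports of $c^{(1)},\ldots,c^{(k)}$. Properness of $f$ plays no role in this verification beyond ensuring that $f_*$ is defined on cellular chains; it is presumably included in the hypothesis for consistency with the ensuing discussion of induced maps between (locally finite) cohomology groups.
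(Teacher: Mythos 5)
Your proof is correct and follows the same route as the paper's: use almost equivariance of $f$ to reduce to a finite set of chains $c^{(1)},\dots,c^{(k)}$, then use almost equivariance of $\phi$ and $\mathbb{Z}$-linearity to conclude. The only difference is that you make explicit the step of expanding each $c^{(j)}$ into its finitely many constituent cells before applying almost equivariance of $\phi$, which the paper leaves implicit; your closing remark that properness plays no role in this particular verification is also accurate.
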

\begin{proof}
Let $\phi:C_n(Y)\to L$ be an almost equivariant cochain. Let $e'$ be
an $n$-cell in $X$. There are finitely many chains $c_1,\dots,c_m\in
C_n(\Wi Y)$ such that $f(\gamma e')\subset \gamma\{c_1,\dots, c_m\}$
for all $\gamma\in\pi$. Then the set
$$\bigcup_{\gamma\in\pi}\{\gamma^{-1}\phi(f(\gamma e'))\}\subset
\bigcup_{\gamma\in\pi}\{\gamma^{-1}\phi(\gamma\{c_1,\dots, c_m\})\}=
\bigcup_{i=1}^m\bigcup_{\gamma\in\pi}\{\gamma^{-1}\phi(\gamma c_i)\}
$$ is finite.
\end{proof}

For every CW-complex $X$ we consider the product CW-complex
structure on $X\times[0,1]$ with the standard cellular structure on
$[0,1]$.

Proposition~\ref{induced1} and the standard facts about cellular
chain complexes imply the following.
\begin{prop}\label{induced2}
Let $X$ and $Y$ be  complexes with free cellular actions of a group
$\pi$.

(A) Then every almost equivariant cellular map $f:X\to Y$ induces an
homomorphism of the almost equivariant cohomology groups
$$
f^*:H^*_{ae}(Y;L)\to H^*_{ae}(X;L).$$

(B) If two almost equivariant maps $f_1, f_2: X\to Y$ are homotopic
by means of a cellular almost equivariant homotopy
$H:X\times[0,1]\to Y$, then they induce the same homomorphism of the
almost equivariant cohomology groups, $f^*_1=f^*_2$.
\end{prop}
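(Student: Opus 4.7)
The plan is to deduce both parts directly from Proposition~\ref{induced1} combined with the standard formalism of cellular chain maps and chain homotopies, the only novelty being the bookkeeping needed to preserve almost equivariance.

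For part (A), I would define the pullback at the cochain level by $(f^\sharp\phi)(e) = \phi(f_*(e))$ for $e$ an $n$-cell of $X$. Proposition~\ref{induced1} is exactly the statement that $f^\sharp$ sends $Hom_{ae}(C_n(Y),L)$ into $Hom_{ae}(C_n(X),L)$, so this assignment is well defined. Since $f_*$ is a cellular chain map, $f^\sharp$ commutes with the coboundary $\delta$, and therefore induces the desired homomorphism $f^*:H^*_{ae}(Y;L)\to H^*_{ae}(X;L)$.

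For part (B), the key step is to manufacture an almost equivariant chain homotopy from the given almost equivariant cellular homotopy $H$. Using the product CW structure on $X\times[0,1]$ in which the unique $1$-cell $I$ of $[0,1]$ is $\pi$-fixed, every $n$-cell $e$ of $X$ gives rise to an $(n+1)$-cell $e\times I$, and I would define $D:C_n(X)\to C_{n+1}(Y)$ by $D(e)=H_*(e\times I)$. The standard cellular computation gives the chain homotopy identity $\partial D+D\partial=f_{2*}-f_{1*}$. Because the $\pi$-action on $X\times[0,1]$ satisfies $\gamma(e\times I)=(\gamma e)\times I$, the almost equivariance of $H$ translates to finiteness of $\{\gamma^{-1}D(\gamma e)\mid\gamma\in\pi\}$ for each $n$-cell $e$ of $X$. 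Reapplying the argument of Proposition~\ref{induced1} to $D$ in place of $f_*$ shows that $\phi\circ D$ is almost equivariant whenever $\phi$ is, and hence $D^\sharp$ is a well-defined cochain homotopy on $Hom_{ae}$. Transposing the chain homotopy identity yields $f_2^\sharp-f_1^\sharp=\delta D^\sharp+D^\sharp\delta$, so $f_1^*=f_2^*$ on $H^*_{ae}$.

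The main obstacle, such as it is, lies in part (B): one must be sure that the concrete chain homotopy produced from the product cellular structure preserves almost equivariance. This works precisely because the $[0,1]$-factor is $\pi$-invariant cell by cell, so the finiteness condition required of $D$ is literally the finiteness condition defining almost equivariance of $H$. Once this is checked, both statements reduce to the standard cellular arguments applied inside the subcomplex $Hom_{ae}(C_*(-),L)$ of $Hom(C_*(-),L)$.
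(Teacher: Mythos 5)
Your proof is correct and is exactly the elaboration the paper has in mind: the paper gives no written proof of this proposition, stating only that it follows from Proposition~\ref{induced1} and standard facts about cellular chain complexes. Part (A) is precisely what you wrote, and part (B) correctly identifies the one nontrivial point — that the standard prism chain homotopy $D(e)=H_*(e\times I)$ inherits almost equivariance from $H$ because $\pi$ acts trivially on the $[0,1]$ factor, so the argument of Proposition~\ref{induced1} applies verbatim to $D$ even though it shifts degree.
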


Similarly one can define the almost equivariant homology groups on a
CW complex by considering infinite locally finite almost equivariant
chains.  Let $X$ be a complex with the fundamental group $\pi$ and
the universal cover $\Wi X$. We call an infinite chain $\sum_{e\in
E_n(\Wi X)} \lambda_ee$ {\em almost equivariant} if the set
$\{\gamma^{-1}\lambda_{\gamma e}\mid\gamma\in\pi\}\subset L$ is
finite for every cell $e$. As we already have mentioned, the complex
of equivariant locally finite chains defines equivariant locally
finite homology $H^{lf,\pi}_*(\Wi X;L)$. The homology defined by the
almost equivariant locally finite chain we call  {\em the almost
equivariant locally finite homology}. We denote them as
$H^{lf,ae}_*(\Wi X;L)$.  We note that like in the case of cohomology
this definition can be carried out for the singular homology and it
gives the same groups. In particular the groups $H^{lf,ae}_*(\Wi
X;L)$ do not depend on the choice of a CW complex structure on $X$

As in the case of cohomology for any complex $K$ there is a
perturbation homomorphism
$$ pert_*^K:H_*(K;L)=H^{lf,\pi}_*(\Wi K;L)\to H_*^{lf,ae}(\Wi K;L).$$
Also, there is an analog of Proposition~\ref{induced2} for the
almost equivariant locally finite homology.

\begin{prop}\label{h-induced}
Let $X$ and $Y$ be  complexes with free cellular actions of a group
$\pi$.

(A) Then every almost equivariant cellular map $f:X\to Y$ induces a
homomorphism of the almost equivariant homology groups
$$
f_*:H_*^{lf,ae}(X;L)\to H_*^{lf,ae}(Y;L).$$

(B) If two almost equivariant maps $f_1, f_2: X\to Y$ are homotopic
by means of a cellular almost equivariant homotopy, then they induce
the same homomorphism of the almost equivariant cohomology groups,
$(f_1)_*=(f_2)_*$.
\end{prop}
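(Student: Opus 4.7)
The plan is to mirror, at the chain level, the cochain arguments already used for Propositions~\ref{induced1} and~\ref{induced2}. Concretely, one must show that an almost equivariant cellular map $f:X\to Y$ extends the usual cellular chain map to a chain map between the locally finite almost equivariant chain complexes, and that an almost equivariant cellular homotopy yields a chain homotopy between the two resulting chain maps that is itself almost equivariant.

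First I would define $f_*:C_n^{lf,ae}(X;L)\to C_n^{lf,ae}(Y;L)$ termwise: if $c=\sum_{e\in E_n(X)}\lambda_e\,e$, set $f_*(c)=\sum_e\lambda_e\,f_*(e)$, using that the finite cellular chain $f_*(e)$ is already defined. The output is locally finite because $f$ is proper on the underlying CW complexes (so $f^{-1}$ of a compact set meets only finitely many cells of $X$ carrying nonzero coefficients of $c$), hence every small $V\subset Y$ receives only finitely many nonzero contributions. Commutation with $\partial$ is inherited termwise from the ordinary cellular case, so once preservation of almost equivariance is settled, $f_*$ becomes a chain map and part (A) follows.

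The substantive point is therefore preservation of almost equivariance. Fix orbit representatives $\{e_\alpha\}$ for the free $\pi$-action on $n$-cells of $X$. Almost equivariance of $c$ provides, for each $\alpha$, a finite set of coefficient patterns $\{\gamma^{-1}\lambda_{\gamma e_\alpha}:\gamma\in\pi\}\subset L$, while almost equivariance of $f$ provides a finite set of image patterns $\{\gamma^{-1}f_*(\gamma e_\alpha):\gamma\in\pi\}\subset C_n(Y)$. Consequently the products $(\gamma^{-1}\lambda_{\gamma e_\alpha})\cdot(\gamma^{-1}f_*(\gamma e_\alpha))$ take only finitely many values as $\gamma$ varies. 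Regrouping these contributions by the $\pi$-orbits of the cells of $Y$ appearing in the image patterns, one checks that the coefficients $\mu_d$ of $f_*(c)$ satisfy the condition that $\{\delta^{-1}\mu_{\delta d}:\delta\in\pi\}\subset L$ is finite for each cell $d$ of $Y$, which is exactly the almost equivariance of $f_*(c)$.

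For part (B) I would apply the same construction to the cellular almost equivariant homotopy $H:X\times[0,1]\to Y$, using the product cell structure with the standard cells of $[0,1]$. The classical prism operator $s:C_n(X)\to C_{n+1}(Y)$ associated to $H$ is itself almost equivariant because $H$ is, so the argument from (A) shows that $s$ extends to $s:C_n^{lf,ae}(X;L)\to C_{n+1}^{lf,ae}(Y;L)$ and satisfies the usual identity $\partial s+s\partial=(f_2)_*-(f_1)_*$; passing to homology gives $(f_1)_*=(f_2)_*$. The main technical obstacle throughout is the combinatorial matching in the preservation-of-almost-equivariance step: the two independent finiteness inputs (on coefficients and on image chains) must be recombined while respecting the free $\pi$-action on $Y$. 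Once this bookkeeping is correctly set up, the remaining arguments are just the chain analog of the cochain manipulations already carried out for Propositions~\ref{induced1} and~\ref{induced2}.
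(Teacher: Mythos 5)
The paper states Proposition~\ref{h-induced} without proof, only remarking that it is the homology analog of Proposition~\ref{induced2}; your chain-level dualization of the cochain arguments is exactly the intended route, and it is essentially correct. Two small points worth making explicit. First, you correctly invoke properness of $f$ to get local finiteness of $f_*(c)$, and you are right to do so even though the proposition as printed omits this hypothesis (it is present in Proposition~\ref{induced1}, which underlies \ref{induced2} and hence this statement as well); without properness $f_*$ need not land in locally finite chains. Second, the ``regrouping'' step you flag can be closed as follows: if $e_1,\dots,e_r$ are the finitely many $\pi$-orbit representatives of $n$-cells in $X$ and $K\subset E_n(Y)$ is the finite union of the supports of the chains $\gamma^{-1}f_*(\gamma e_i)$ (finite by almost equivariance of $f$), then $\mathrm{supp}\,f_*(\gamma e_i)\subset\gamma K$, so for a fixed cell $d$ of $Y$ the cells $e'$ that can contribute to $\delta^{-1}\mu_{\delta d}$ lie in the fixed finite set $T_d=\{\gamma e_i:\gamma^{-1}d\in K\}$, independent of $\delta$; combining this with the finiteness of $\{\delta^{-1}\lambda_{\delta e'}\}$ and $\{\delta^{-1}f_*(\delta e')\}$ for $e'\in T_d$ gives finiteness of $\{\delta^{-1}\mu_{\delta d}:\delta\in\pi\}$. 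Part (B) via the prism operator is fine once one notes that the same properness and cocompactness considerations apply to $H$, which is the content of Proposition~\ref{uniform=almost} in the Lipschitz setting.
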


Let $X_i$, $i=1,2$ be complexes with free action of $\pi_i$ and
$L_i$ be $\pi_i$-modules. The tensor product on locally finite
chains
$$
C_k^{lf}(X_1,L_1)\otimes C_l^{lf}(X_2,L_2)\to C^{lf}_{k+l}(X_1\times
X_2,L_1\otimes L_2)$$ defined by the formula
$$
\sum m_{\sigma}\sigma\otimes\sum n_{\kappa}\kappa \to\sum
(m_{\sigma}\otimes n_{\kappa})(\sigma\times\kappa)$$ takes the
product of almost equivariant chains to almost equivariant. Since
the above tensor product defines homomorphisms $\phi_*$ and
$\phi_*^{ae}$ for both the equivariant and the almost equivariant
homology, we obtain the following:
\begin{prop}\label{tensor}
For any complexes $M$ and $N$ with universal coverings $\Wi M$ and
$\Wi N$, for any $\pi_(M)$ and $\pi_1(N)$ modules $L_1$ and $L_2$,
and for any $k$ and $l$, there is a commutative diagram:
$$
\begin{CD}
H_k(M;L_1)\otimes H_l(N;L_2) @>\phi_*>> H_{k+l}(M\times N;L_1\otimes
L_2)\\
@V{pert_*^M\otimes pert_*^N}VV @V{pert_*^{M\times N}}VV\\
H_k^{lf,ae}(\Wi M;L_1)\otimes H_l^{lf,ae}(\Wi N;L_2) @>\phi_*^{ae}>>
H_{k+l}^{lf.ae}(\Wi M\times \Wi N;L_1\otimes
L_2).\\
\end{CD}
$$
\end{prop}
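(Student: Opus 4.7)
The plan is to establish commutativity at the chain level, since both $\phi_*$ and $\phi_*^{ae}$ are defined by the same tensor product formula and the perturbation homomorphism is induced by the inclusion of equivariant chains into almost equivariant chains. First I would fix equivariant CW structures on $\tilde M$ and $\tilde N$, endow $\tilde M \times \tilde N$ with the product cellular structure, identify $\pi_1(M\times N) = \pi_1(M) \times \pi_1(N)$ acting diagonally, and take as the chain-level definition of both maps the formula
$$
\Bigl(\sum_e m_e e\Bigr) \otimes \Bigl(\sum_f n_f f\Bigr) \;\longmapsto\; \sum_{e,f} (m_e \otimes n_f)(e\times f).
$$
I would then verify that the inclusion $C_*^{lf,\pi}(\tilde X;L)\hookrightarrow C_*^{lf,ae}(\tilde X;L)$ giving rise to $pert_*$ is a chain map and that both versions of $\phi$ are chain maps, so both rows of the diagram descend to homology.

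The core computation is the preservation of the relevant finiteness conditions. Given almost equivariant chains $c_1=\sum m_e e$ and $c_2=\sum n_f f$, I would check almost equivariance of the product by observing that for any product cell $e\times f$,
$$
(\gamma_1,\gamma_2)^{-1}\bigl(m_{\gamma_1 e}\otimes n_{\gamma_2 f}\bigr)
= \bigl(\gamma_1^{-1}m_{\gamma_1 e}\bigr)\otimes\bigl(\gamma_2^{-1}n_{\gamma_2 f}\bigr),
$$
and each factor on the right ranges over a finite subset of $L_1$ and $L_2$ respectively by the almost equivariance of $c_1$ and $c_2$, so their tensor products form a finite subset of $L_1\otimes L_2$. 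The equivariant case is the analogous direct calculation showing that the tensor of two equivariant chains is equivariant. Local finiteness in the product follows from the product CW structure: any point $(x,y)\in\tilde M\times\tilde N$ admits a product neighborhood $U\times V$ meeting only finitely many cells $e$ with $m_e\neq 0$ and finitely many $f$ with $n_f\neq 0$, so only finitely many $e\times f$ with non-trivial coefficient.

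With these checks in place, the commutativity of the square becomes essentially tautological: starting with an equivariant chain $c_1\otimes c_2$, applying $\phi$ and then including into almost equivariant chains produces exactly the same chain as first including into almost equivariant chains and then applying $\phi^{ae}$, since both operations are literally the same formula on the same underlying chain. Passing to homology yields the stated commutative diagram.

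The only potentially delicate point is book-keeping rather than substance: ensuring that the tensor product chain genuinely lies in the locally finite almost equivariant complex of the product (not merely in some larger complex of infinite chains), and that the differential Leibniz rule $\partial(\sigma\times\kappa)=\partial\sigma\times\kappa+(-1)^{|\sigma|}\sigma\times\partial\kappa$ preserves both subcomplexes. Neither presents a real obstacle given the product CW structure and the compatibility of the $\pi_1(M)\times\pi_1(N)$ action with that structure.
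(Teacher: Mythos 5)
Your proof is correct and matches the paper's approach: the paper likewise observes that the tensor product formula on locally finite chains preserves almost equivariance (and equivariance), so both $\phi_*$ and $\phi_*^{ae}$ are induced by the same chain-level map and the square commutes by inspection. The additional bookkeeping you supply (local finiteness, Leibniz rule, chain-map property of the inclusion) is left implicit in the paper but is exactly the right content.
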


\

Let $M$ be an oriented $n$-dimensional PL manifold with a fixed
triangulation. Denote by $M^*$ the dual complex. There is a
bijection between $k$-simplices $e$ and the dual $(n-k)$-cells $e^*$
which defines the Poincare duality isomorphism. This bijection
extends to a similar bijection on the universal cover $\Wi M$. Let
$\pi=\pi_1(M)$. For any $\pi$-module $L$ the Poincare duality on $M$
with coefficients in $L$ is given by the cochain-chain level by
isomorphisms
$$
Hom_{\pi}(C_k(\Wi M^*),L) \stackrel{PD_k}\longrightarrow
C_{n-k}^{lf,\pi}(\Wi M;L)$$ where $PD_k$ takes a cochain
$\phi:C_k(\Wi M^*)\to L$ to the following chain $\sum_{e\in
E_{n-k}(\Wi M)}\phi(e^*)e$. The family $PD_*$ is a chain isomorphism
which is also known as the cap product
$$
PD_k(\phi)=\phi\cap[\Wi M]$$ with the fundamental class $[\Wi M]\in
C_n^{lf,\pi}(\Wi M)$, where $[\Wi M]=\sum_{e\in E_n(\Wi M)} e$. We
note that the homomorphisms $PD_k$ and $PD_k^{-1}$ extend to the
almost equivariant chains and cochains:
$$
Hom_{ae}(C_k(\Wi M^*),L) \stackrel{PD_k}\longrightarrow
C_{n-k}^{lf,ae}(\Wi M;L).$$ Thus, the homomorphisms $PD_*$ define
the Poincare duality isomorphisms $PD_{ae}$ between the almost
equivariant cohomology and homology. We summarize this in the
following
\begin{prop}\label{PD}
For any closed oriented $n$-manifold $M$ and any $\pi_1(M)$-module
$L$ the Poincare duality forms the following commutative diagram:
$$
\begin{CD}
H^k(M;L) @ >pert^*_M>> H^k_{ae}(\Wi M;L)\\
@V{-\cap [M]}VV @V{PD_{ae}}VV\\
H_{n-k}(M;L) @>pert_*^M>> H_{n-k}^{lf,ae}(\Wi M;L).\\
\end{CD}
$$
\end{prop}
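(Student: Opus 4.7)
The plan is to prove commutativity already at the cochain--chain level, before passing to (co)homology, which reduces the statement essentially to an unpacking of definitions. The point is that the perturbation maps $pert^*_M$ and $pert_*^M$ are induced by the tautological inclusions
\[
Hom_{\pi}(C_k(\wt M^*),L)\hookrightarrow Hom_{ae}(C_k(\wt M^*),L),\qquad
C_{n-k}^{lf,\pi}(\wt M;L)\hookrightarrow C_{n-k}^{lf,ae}(\wt M;L),
\]
while both the classical cap product $-\cap[M]$ and the almost equivariant duality $PD_{ae}$ are defined by the same formula $\phi\mapsto \sum_{e\in E_{n-k}(\wt M)}\phi(e^*)e$.

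First I would verify that the explicit formula for $PD_k$ does send almost equivariant cochains to almost equivariant chains, which the text asserts but uses in an essential way. The fixed triangulation of $M$ lifts to a $\pi$-equivariant triangulation of $\wt M$, and the duality bijection $e\leftrightarrow e^*$ is $\pi$-equivariant, i.e.\ $(\gamma e)^*=\gamma e^*$ for all $\gamma\in\pi$. Consequently, if $\phi:C_k(\wt M^*)\to L$ is almost equivariant and $\lambda_e:=\phi(e^*)$ are the coefficients of $PD_k(\phi)$, then for every $n-k$ cell $e$,
\[
\{\gamma^{-1}\lambda_{\gamma e}\mid\gamma\in\pi\}
=\{\gamma^{-1}\phi((\gamma e)^*)\mid\gamma\in\pi\}
=\{\gamma^{-1}\phi(\gamma e^*)\mid\gamma\in\pi\},
\]
which is finite by almost equivariance of $\phi$. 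The same bijection shows local finiteness is preserved, so $PD_k$ restricts to an isomorphism on the almost equivariant subcomplexes, inducing $PD_{ae}$ on cohomology.

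Next I would check the square on chains/cochains. Starting from an equivariant cocycle $\phi\in Hom_{\pi}(C_k(\wt M^*),L)$, the composite $PD_{ae}\circ pert^*_M$ sends $\phi$ to $\sum_{e}\phi(e^*)e$ viewed as an almost equivariant chain, while $pert_*^M\circ(-\cap[M])$ sends $\phi$ to $\sum_{e}\phi(e^*)e$ viewed as an equivariant chain included into the almost equivariant chain complex. These are literally the same element of $C_{n-k}^{lf,ae}(\wt M;L)$. Hence the square commutes on the nose at the cochain--chain level, and passing to (co)homology gives the commutative diagram claimed.

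The only point that requires any care is the equivariance of the dual cell bijection $e\leftrightarrow e^*$ together with the observation that the almost equivariance condition on $\phi$ translates exactly into the almost equivariance condition on the chain $PD_k(\phi)$ under this bijection. Once that is noted, the rest is formal: the perturbation maps are simply inclusions of subcomplexes, and both $-\cap[M]$ and $PD_{ae}$ are defined by the same formula, so there is nothing further to verify.
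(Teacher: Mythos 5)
Your proof is correct and takes essentially the same approach as the paper, which gives no separate proof but merely observes that the chain-level formula $\phi\mapsto\sum_e\phi(e^*)e$ extends $PD_k$ to almost equivariant cochains and chains, making commutativity automatic since the perturbation maps are inclusions. You spell out the one nontrivial check the paper elides — that equivariance of the dual cell bijection $e\leftrightarrow e^*$ carries the almost equivariance condition on $\phi$ to the almost equivariance condition on $PD_k(\phi)$ — and this verification is correct.
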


We note that the operation of the cap product for equivariant
homology cohomology automatically extends on the chain-cochain level
to the cap product on the almost equivariant homology and
cohomology. Then the Poincare Duality isomorphism $PD_{ae}$ for $\Wi
M$ can be described as the cap product with the homology class
$pert_*^M([M])$.

\section{Obstruction to the inequality $\dim_{MC}\Wi M^n<n$}

Let $\pi$ be a finitely presented group. Then the classifying space
$B\pi=K(\pi,1)$ can be taken to be a locally finite complex. We fix
a geodesic metric on $B\pi$. Let $p_{\pi}:E\pi\to B\pi$ denote the
universal covering. We consider the induced CW complex structure and
induced geodesic metric on $E\pi$.

\begin{prop}\label{sweep}
Let $K$ be a finite complex with the universal cover $p:\Wi K\to K$
supplied by a geodesic metric induced from $K$. Let $f:K\to B\pi$ be
a cellular Lipschitz map classifying $p$. Suppose that $\dim_{MC}\Wi
K< n$. Then for every lift $\Wi f:\Wi K\to E\pi$ of $fp$ there is a
Lipschitz cellular homotopy $H:\Wi K\times I\to E\pi$ of $\Wi f$ to
a map $g:\Wi K\to E\pi^{(n-1)}$ where $E\pi^{(n-1)}$ denotes the
$(n-1)$-skeleton of $E\pi$.
\end{prop}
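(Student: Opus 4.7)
The plan is to combine the Lipschitz uniformly cobounded map $\phi:\Wi K\to N^{n-1}$ guaranteed by $\dim_{MC}\Wi K<n$ with the given lift $\Wi f$ in order to build a Lipschitz cellular map $g:\Wi K\to E\pi^{(n-1)}$ together with a Lipschitz cellular homotopy from $\Wi f$ to $g$. First I would construct an auxiliary Lipschitz map $\psi:N^{n-1}\to E\pi$ as follows: for each vertex $v$ of $N^{n-1}$ with non-empty $\phi$-preimage, pick a representative $x_v\in\phi^{-1}(v)$ and set $\psi(v):=\Wi f(x_v)$; then extend $\psi$ cellularly over the higher skeleta of $N^{n-1}$ using the contractibility of $E\pi$, so there is no obstruction to extension. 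Applying cellular approximation and using $\dim N^{n-1}=n-1$, I may arrange $\psi(N^{n-1})\subset E\pi^{(n-1)}$. The composition $g:=\psi\circ\phi$ is then cellular and lands in $E\pi^{(n-1)}$.

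For the Lipschitz property of $g$, the key observation is that any two adjacent vertices $v,w\in N^{n-1}$ admit representatives $x_v,x_w$ lying in the common bounded preimage $\phi^{-1}(\overline{vw})$, whence $\psi(v)=\Wi f(x_v)$ and $\psi(w)=\Wi f(x_w)$ sit at uniformly bounded distance in $E\pi$ by the Lipschitz property of $\Wi f$. Choosing the cellular extensions of $\psi$ over higher simplices inside uniformly bounded subcomplexes of $E\pi$ makes $\psi$, and hence $g$, Lipschitz. The Lipschitz homotopy $H:\Wi K\times I\to E\pi$ from $\Wi f$ to $g$ is then built by noting that $\Wi f(x)$ and $g(x)=\psi(\phi(x))$ stay at uniformly bounded distance in $E\pi$ for every $x\in\Wi K$: picking $v$ to be a vertex of the carrier simplex of $\phi(x)$, the points $x$ and $x_v$ lie together in a bounded preimage, so $\Wi f(x)$ is close to $\psi(v)$, and $\psi(\phi(x))$ lies in the bounded $\psi$-image of that simplex. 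Joining $\Wi f(x)$ to $g(x)$ by a continuous family of uniformly bounded cellular paths, constructed inductively over the cells of $\Wi K$ by contractibility of $E\pi$ and cellular approximation, produces the required Lipschitz cellular homotopy.

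The main obstacle I anticipate is that uniform coboundedness of $\phi$ only bounds preimages of \emph{points}, whereas the argument needs a uniform bound on preimages of closed simplices of $N^{n-1}$. To secure this one either chooses $\phi$ judiciously from the start (for example, as the projection to the nerve of a uniformly bounded cover of $\Wi K$, which also realises the hypothesis $\dim_{MC}\Wi K<n$) or replaces the given $\phi$ by such a map without losing the hypothesis. Once a uniform diameter bound on preimages of simplices is in place, all cellular extensions of $\psi$ and all homotopy paths can be confined to uniformly bounded subcomplexes of $E\pi$, and the Lipschitz conclusions follow.
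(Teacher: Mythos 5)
Your outline follows the paper's own strategy: build a Lipschitz map $q:N\to E\pi^{(n-1)}$ (you call it $\psi$) by induction on the skeleta of $N$, compose with the uniformly cobounded Lipschitz $\phi:\Wi K\to N$, and then produce a bounded (hence Lipschitz after cellular approximation) homotopy from $\Wi f$ to $q\circ\phi$ using the contractibility of $E\pi$. You also correctly flag that uniform coboundedness only controls point-preimages and that the argument needs a uniform diameter bound on $\phi^{-1}(\Delta)$ for simplices $\Delta$ --- this is precisely the ``we may assume'' that the paper invokes silently, and your suggestion to achieve it by choosing $\phi$ as the projection to the nerve of a uniformly bounded cover is a reasonable way to justify that assumption.

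There is, however, a genuine gap in your Lipschitz argument for the inductive extension of $\psi$. You assert that ``choosing the cellular extensions of $\psi$ over higher simplices inside uniformly bounded subcomplexes of $E\pi$ makes $\psi$ \dots\ Lipschitz.'' This does not follow: a map from a standard $k$-simplex into a subcomplex of bounded diameter can still have arbitrarily large Lipschitz constant, so merely confining the images to bounded pieces of $E\pi$ gives no uniform control over the Lipschitz constants of the individual cell extensions, and hence no bound for $\psi$ as a whole. The paper's proof supplies the missing ingredient: after step $k$, one arranges that the family of restrictions $\{p_{\pi}\circ q\circ h:\partial\Delta^{k+1}\to B\pi\}_{h\in\mathcal H_k}$, projected to the \emph{quotient} $B\pi$, is a \emph{finite} set (this uses that $B\pi$ has locally finite, and in low degrees finite, skeleta and that the previous step already produced only finitely many boundary types up to $\pi$-translation). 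One then fixes a single Lipschitz ``minimal'' filling for each of the finitely many boundary types and propagates it equivariantly. It is this finiteness-up-to-$\pi$-translation bookkeeping --- not the mere boundedness of images --- that yields a uniform Lipschitz constant. Your write-up should make this argument explicit at each inductive step; without it the Lipschitz conclusion for $\psi$, and hence for $g=\psi\circ\phi$ and for the homotopy $H$, is not established.
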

\begin{proof}
We may assume that $B\pi$ is a locally finite complex. Moreover,
since the group $\pi=\pi_1(K)$ is finitely presented, we may assume
that the 2-skeleton $B\pi^{(2)}$ is a finite complex. Let $\phi:\Wi
K\to N$ be a uniformly cobounded Lipschitz map to an
$(n-1)$-dimensional simplicial complex. We may assume that $\phi$ is
cellular and surjective on the cell level, that is an every cell in
$N$ has nonempty intersection with the image $\phi(\Wi K)$. Also we
may assume that there is $C>0$ such that $diam(\phi^{-1}(\Delta))<C$
of all simplices $\Delta$ in $N$. We construct a Lipschitz map
$q:N\to E\pi^{(n-1)}$ by induction on dimension of the skeleton of
$N$. For every $v\in N^{(0)}$ we define $q(v)$ be a closest vertex
in $E\pi$ to the set $\Wi f(\phi^{-1}(v))$. Then for every edge
$[v,v']$ in $N$ we define $q([v,v'])$ to be a shortest path in the
Cayley graph $E\pi^{(1)}$ taken with the graph metric from $q(v)$ to
$q(v')$. Since the distance between $q(v)$ and $q(v')$ for all edges
$[v,v']$ is uniformly bounded, there is an upper bound on the number
of translational isometries of such paths. Then for every 2-simplex
$\sigma=[v_0,v_1,v_2]$ is $N$ we take a filing of
$q(\partial\sigma)$ in $E\pi^{(2)}$ that uses a minimal number of
2-cells and so on.

Let $\scr H_k$ denote the set of all simplicial imbeddings
$h:\Delta^k\to N$ of the standard $k$-simplex. After the step number
$k<n$ we will get a Lipschitz map $q:N^{(k)}\to E\pi^{(k)}$ such
that the family of maps $\{p_{\pi}qh:\partial\Delta^{k+1}\to
B\pi\}_{h\in\scr H_k}$ is finite. Then we can construct a Lipschitz
extension $q:N^{(k+1)}\to E\pi^{(k+1)}$ using minimal fillings in
$B\pi$. As the result we obtain a Lipschitz map $q:N\to E\pi^{(n)}$
with $p_{\pi}q(N)$ compact. We note that the composition $q\phi$ is
on bounded distance from $\Wi f$. Clearly, the maps $\Wi f$ and
$q\phi$ are homotopic as maps to a contractible space. Since $q\phi$
and $\Wi f$ are on bounded distance with compact projections
$p_{\pi}\Wi f(\Wi X)$ and $p_{\pi}q\phi(\Wi X)$, there is a
uniformly bounded homotopy between them. Then we can turn that
homotopy to a cellular Lipschitz map.
\end{proof}

Let $A$ be a subset of a CW complex $X$. The star neighborhood
$St(A)$ of $A$ is the closure of the union of all cells in $X$ that
have a nonempty intersection with $A$. Note that $St(A)$ is a
subcomplex of $X$.
\begin{prop}\label{uniform=almost} Let $X$ and $Y$ be as above with $Y$ locally finite.
Then a cellular Lipschitz homotopy $\Phi:X\times[0,1]\to Y$ of an
almost equivariant map is almost equivariant.
\end{prop}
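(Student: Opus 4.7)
The plan is to fix a cell $\sigma = e\times\tau$ of $X\times[0,1]$ and show directly that
$$\{\gamma^{-1}\Phi_*(\gamma\sigma)\mid \gamma\in\pi\}\subset C_*(Y)$$
is finite. The two ingredients to combine are (i) almost equivariance of the $t=0$ endpoint $\Phi_0 = \Phi|_{X\times\{0\}}$, which controls a single vertex up to finite ambiguity, and (ii) the Lipschitz condition together with local finiteness of $Y$, which propagates this control to all of $\gamma\sigma$.

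First I would pick a $0$-cell $v$ in the closure $\overline e$ (or $v=e$ if $\dim e=0$), form $v_0 := (v,0)\in X\times\{0\}$, and set $D := d(v_0,\sigma) + \mathrm{diam}(\sigma)$, which depends only on $\sigma$. Writing $L$ for the Lipschitz constant of $\Phi$, one has $\Phi(\gamma\sigma)\subset B(\Phi(\gamma v_0), LD)$. Since $\pi$ acts by isometries on $Y$ (the metric on $Y$ is induced from the $\ell_2$-embedding and the action permutes vertices), applying $\gamma^{-1}$ yields
$$\gamma^{-1}\Phi(\gamma\sigma)\subset B(\gamma^{-1}\Phi(\gamma v_0), LD).$$
Now $\Phi(\gamma v_0)=\Phi_0(\gamma v)$, so the center $\gamma^{-1}\Phi_0(\gamma v)$ of this ball ranges over a finite subset $W_v\subset Y^{(0)}$ by the almost equivariance hypothesis on $\Phi_0$. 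Hence $\gamma^{-1}\Phi(\gamma\sigma)$ lies in the bounded set $\bigcup_{w\in W_v}B(w,LD)$, which is independent of $\gamma$. Local finiteness of $Y$ then shows that only finitely many cells of $Y$ intersect this bounded set, so the support of each chain $\gamma^{-1}\Phi_*(\gamma\sigma)$ lies in a fixed finite family $\mathcal{F}_\sigma$ of cells.

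To pass from bounded support to finiteness of the set of chains it remains to bound the integer coefficients. Here I would invoke the Lipschitz hypothesis once more: a cellular $L$-Lipschitz map onto a cell of standard geometry has degree bounded by a constant $C=C(L,\dim\sigma)$, so each coefficient of $\gamma^{-1}\Phi_*(\gamma\sigma)$ lies in $[-C,C]$. Bounded support together with bounded coefficients forces only finitely many chains to occur, proving almost equivariance of $\Phi$.

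The main obstacle is the coefficient bound: in the simplicial setting with simplex-wise linear maps the coefficients lie in $\{-1,0,1\}$ and the step is trivial, but for general cellular maps one needs a standard degree estimate for Lipschitz self-maps of standard cells. All other ingredients --- isometric $\pi$-action on $Y$, local finiteness, and almost equivariance of $\Phi_0$ --- enter in a transparent way, and the case $\tau=\{0\}$ of the argument reduces tautologically to the hypothesis on $\Phi_0$.
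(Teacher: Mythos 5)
Your proof follows essentially the same approach as the paper's: anchor at the $t=0$ endpoint via the almost-equivariance hypothesis, propagate control across the prism using the Lipschitz constant, and then invoke local finiteness of $Y$ to deduce that all the translated chains are supported in a single finite subcomplex. The only cosmetic differences are that you anchor at a single vertex $v$ and use metric balls $B(w,LD)$ where the paper anchors at the whole cell $e\times\{0\}$ and uses iterated star neighborhoods $St^m(K)$; these are interchangeable in a locally finite simplicial complex with the geodesic (or $\ell_2$-induced) metric. On the last step (bounding coefficients), the paper simply asserts ``Clearly, the coefficients \dots are bounded,'' while you correctly flag that this rests on a degree estimate for Lipschitz cellular maps; this is the same informal step, just made explicit, so it is not a gap relative to the paper.
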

\begin{proof}
Since $\Phi|_{X\times\{0\}}$ is almost equivariant, for every cell
$e\subset X$ the union
$$\bigcup_{\gamma\in\pi}\Phi(e\times\{0\})\subset
\sigma_1\cup\dots\cup\sigma_k.$$ Then
$$\bigcup_{\gamma\in\pi}\Phi(e\times(0,1))\subset
St^m(K)$$ where $K$ is the closure of
$\sigma_1\cup\dots\cup\sigma_k$. The existence of $m$ follows from
the fact that $\Phi$ is Lipschitz. The local finiteness of $Y$ and
finiteness of $K$ imply that the $m$-times iterated star
neighborhood $St^m(K)$ of $K$ is a finite subcomplex of $Y$.
Clearly, the coefficients of the cells in $\Phi_*(e\times(0,1))\in
C_*(Y)$ are bounded.
\end{proof}

Here we recall some basic facts of the elementary obstruction
theory. Let $f:X\to Y$ be a cellular map that induces an isomorphism
of the fundamental groups. We want to deform the map $f$ to a map to
the $(n-1)$-skeleton $Y^{(n-1)}$. For that we consider the extension
problem $$X\supset X^{(n-1)} \stackrel{f}{\to} Y^{(n-1)},$$ i.e.,
the problem to extend $f:X^{(n-1)}\to Y^{(n-1)}$ continuously to a
map $\bar f:X\to Y^{(n-1)}$. The primary obstruction for this
problem $o_f$ is the obstruction to extend $f$ to the $n$-skeleton.
It lies in the cohomology group $H^n(X;L)$ where
$L=\pi_{n-1}(Y^{(n-1)})$ is the $(n-1)$-dimensional homotopy group
considered as a $\pi$-module for $\pi=\pi_1(Y)=\pi_1(X)$. The
obstruction theory says that a map $g:X\to Y^{(n-1)}$ that agrees
with $f$ on the $(n-2)$-skeleton $X^{(n-2)}$ exits if and only if
$o_f=0$. The primary obstruction is natural: If $g:Z\to X$ is a
cellular map, then $o_{gf}=g^*(o_f)$. In particular, in our case
$o_f=f^*(o_1)$ where $o_1\in H^n(Y;L)$ is the primary obstruction to
the retraction of $Y$ to the $(n-1)$-skeleton.

\begin{defin}
Let $g:Y^{(n-1)}\to Z$ be a Lipschitz map of the $(n-1)$-skeleton of
an $n$-dimensional  complex to a metric space. We call the problem
to extend $g$ to a Lipschitz map $\bar g:Y\to Z$  a {\em Lipschitz
extension problem }.
\end{defin}

\begin{defin}\label{obst}
Let $X$ be a finite $n$-complex, $n\ge 3$, with $\pi_1(X)=\pi$
and $\Wi f:\Wi X\to \Wi Y$ be a lift of a cellular map $f:X\to Y$
that induces an isomorphism of the fundamental groups. We define an
element $o_{\Wi f}\in H^n_{ae}(\Wi X;\pi_{n-1}(Y^{(n-1)}))$ as the
class of the cocycle
$$C_{\Wi f}:C_n(\Wi X)\to\pi_{n-1}(\Wi Y^{(n-1)})=\pi_{n-1}(Y^{(n-1)})$$
defined by the formula $C_{\Wi f}(e)=[\Wi f\circ\phi_e]$ where
$\phi_e:S^{n-1}=\partial B^n\to X$ is the attaching map of an
$n$-cell $e$. Since the map $\Wi f$ is $\pi$-equivariant, the
cocycle $C_{\Wi f}$ is a $\pi$-equivariant. Thus, it defines an
element $\kappa_{f}\in H^n_{\pi}(\Wi X;\pi_{n-1}(Y^{(n-1)}))$ of the
equivariant cohomology and $o_{\Wi f}=pert^*_X(\kappa_{f})$.
\end{defin}

We consider an arbitrary geodesic metric on a locally finite complex
and the induced metric on its universal cover.
\begin{prop}\label{obstructiontheory}
Let $\Wi f:\Wi X\to\Wi Y$ be a lift of a Lipschitz cellular map
$f:X\to Y$ of a finite $n$-dimensional complex to a locally finite
that induces an isomorphism of the fundamental groups.  Then the
above cohomology class $o_{\Wi f}\in H^n_{ae}(\Wi
X;\pi_{n-1}(Y^{(n-1)}))$ is the primary obstruction for the
following Lipschitz extension problem
$$\Wi X\supset \Wi X^{(n-1)}\stackrel{\Wi f|}{\to} \Wi Y^{(n-1)}.$$
Thus, $o_{\Wi f}=0$ if and only if there is a Lipschitz map $\bar
g:\Wi X\to \Wi Y^{(n-1)}$ which agrees with $\Wi f$ onto $\Wi
X^{(n-2)}$.
\end{prop}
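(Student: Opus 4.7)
The plan is to adapt the classical primary obstruction argument, with the twist that every intermediate piece of data must be Lipschitz and almost equivariant; finiteness of $X$ and local finiteness of $\Wi Y$ will reduce everything to finitely many model choices that can then be translated by $\pi$.

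For the direction ``$o_{\Wi f}=0\Rightarrow$ Lipschitz extension exists'', I would pick an almost equivariant cochain $\psi\in Hom_{ae}(C_{n-1}(\Wi X),\pi_{n-1}(\Wi Y^{(n-1)}))$ with $\delta\psi=C_{\Wi f}$. Fix $\pi$-orbit representatives $\sigma_1,\dots,\sigma_s$ of the $(n-1)$-cells and $e_1,\dots,e_t$ of the $n$-cells of $\Wi X$. By almost equivariance each set $F_i=\{\gamma^{-1}\psi(\gamma\sigma_i)\mid\gamma\in\pi\}\subset\pi_{n-1}(\Wi Y^{(n-1)})$ is finite. For each pair $(i,\alpha)$ with $\alpha\in F_i$, choose once and for all a Lipschitz homotopy $h_{i,\alpha}$ of $\Wi f|_{\sigma_i}$ rel $\partial\sigma_i$ that modifies its class by $\alpha$; this is possible because $\Wi Y^{(n-1)}$ is a locally finite complex with the standard simplicial metric and $\Wi f(\sigma_i)$ lies in a finite subcomplex. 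Now redefine $\Wi f$ on each cell $\gamma\sigma_i$ by the $\gamma$-translate $\gamma\cdot h_{i,\alpha(\gamma,i)}$, where $\alpha(\gamma,i)\in F_i$ is dictated by $\psi(\gamma\sigma_i)=\gamma\cdot\alpha(\gamma,i)$. Because only finitely many local models are used and $\gamma$ acts by isometries, the resulting map $\Wi f'$ has the same Lipschitz behaviour as $\Wi f$ up to a uniform additive correction, agrees with $\Wi f$ on $\Wi X^{(n-2)}$, and satisfies $C_{\Wi f'}=C_{\Wi f}-\delta\psi=0$. For each $n$-cell $\gamma e_j$ the set $\{\gamma^{-1}(\Wi f'\circ\phi_{\gamma e_j})\mid\gamma\in\pi\}$ is finite, because on each boundary cell of $e_j$ the cochain $\psi$ contributes only finitely many values; I would then pick a Lipschitz null-homotopy for each of these finitely many ``boundary types'' and translate by $\gamma$, obtaining the desired Lipschitz extension $\bar g\colon\Wi X\to\Wi Y^{(n-1)}$.

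For the converse direction, given a Lipschitz extension $\bar g$, define the usual difference cochain $d\in Hom(C_{n-1}(\Wi X),\pi_{n-1}(\Wi Y^{(n-1)}))$ by letting $d(\sigma)$ be the class of the sphere $\Wi f|_{\sigma}\cup_{\partial\sigma}\bar g|_{\sigma}$ with an auxiliary basepoint path inside $\Wi f(\sigma)\cup\bar g(\sigma)$. Elementary obstruction theory gives $\delta d=C_{\Wi f}$, since the obstruction cocycle of $\bar g$ vanishes. The point is to verify $d\in Hom_{ae}$. Since $\bar g=\Wi f$ on $\partial\sigma_i\subset\Wi X^{(n-2)}$ and both maps share a common Lipschitz constant $L$, the images $\bar g(\sigma_i)$ and $\Wi f(\sigma_i)$ both lie inside a fixed bounded neighbourhood of $\Wi f(\partial\sigma_i)$, and local finiteness of $\Wi Y$ implies that this neighbourhood is contained in a finite subcomplex $K_i\subset\Wi Y^{(n-1)}$. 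Translating by the isometry $\gamma^{-1}$, the maps $\gamma^{-1}\bar g|_{\gamma\sigma_i}$ all land in $K_i$, are all $L$-Lipschitz, and all share the boundary values $\Wi f|_{\partial\sigma_i}$. By Arzel\`a--Ascoli this family is relatively compact, hence totally bounded, in $C^0(\sigma_i,K_i)$; since two sufficiently $C^0$-close maps into a finite CW complex are homotopic rel boundary (via a short linear homotopy inside simplices), only finitely many rel-boundary homotopy classes occur. Therefore $\{\gamma^{-1}d(\gamma\sigma_i)\mid\gamma\in\pi\}$ is finite and $d$ is almost equivariant, witnessing $o_{\Wi f}=0$.

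I expect the main obstacle to be uniform Lipschitz bookkeeping. One must check that assembling finitely many local Lipschitz modifications $h_{i,\alpha}$ together with all their $\pi$-translates really yields a globally Lipschitz map rather than merely a piecewise Lipschitz one; the fact that the translates are isometries is essential here. In the converse direction, the delicate point is the Arzel\`a--Ascoli step: one needs the family of translated Lipschitz restrictions to live in a single compact subcomplex, which in turn uses both that $\bar g$ agrees with $\Wi f$ on $\Wi X^{(n-2)}$ and that $\Wi Y$ is locally finite with the standard simplicial metric. A last subtlety is a consistent choice of basepoint paths so that the attaching-map classes $C_{\Wi f}(e)$ and the differences $d(\sigma)$ really take values in the $\pi$-module $\pi_{n-1}(Y^{(n-1)})$ used in the statement.
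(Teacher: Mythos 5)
Your argument follows the paper's proof essentially step for step: in both directions you fix $\pi$-orbit representatives, build finitely many local Lipschitz model maps dictated by the almost equivariant cochain $\psi$ (resp.\ by the difference cochain $d$), and propagate them by $\pi$-translation to get a globally Lipschitz map, then repeat once more over the $n$-cells using finitely many translated null-homotopies. The only place you add substance is the converse direction, where the paper asserts in one line that a $\lambda$-Lipschitz constraint forces finitely many realizable homotopy classes; your Arzel\`a--Ascoli argument (uniform Lipschitz constant, fixed boundary values forcing the image into a single finite subcomplex of the locally finite $\Wi Y$, total boundedness in $C^0$, and homotopy rel boundary of $C^0$-close maps) spells out exactly the justification the paper leaves implicit, and the only thing to double-check is the sign in ``modifies its class by $\alpha$'' so that $d_{\Wi f,\Wi f'}=-\psi$ as in the paper rather than $+\psi$.
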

\begin{proof}
The proof goes along the lines of a similar statement from the
classical obstruction theory. Let $C_{\Wi f}=\delta\Psi$ where
$\Psi:C_{n-1}(\Wi X)\to \pi_{n-1}(\Wi Y^{(n-1)})$ be an almost
equivariant homomorphism. For each $(n-1)$-cell $e$ of $X$ we fix a
section $\Wi e\subset\Wi X$, an $(n-1)$-cell in $\Wi X$. Then the
set $\{\gamma^{-1}\Psi(\gamma\Wi e)\mid
\gamma\in\pi\}=\{\gamma_i^{-1}\Psi(\gamma_i\Wi e)\}_{i=1}^m$ is
finite. Like in the classical obstruction theory we define a map
$g_i:\gamma_i\Wi e\to Y^{(n-1)}$, $i=1,\dots, m$ on cells
$\gamma_i\Wi e$ such that $g_i$ agrees with $\Wi f$ outside a small
$(n-1)$-ball $B_i\subset\gamma_i\Wi e$ and  the difference of $\Wi
f$ and $g_i$ restricted to $B_i$ defines a map $d_{\Wi
f,g_i}:S^{n-1}\to Y^{(n-1)}$ that represents the class
$-\Psi(\gamma_i\Wi e)$. For a general lift $\gamma\Wi e$ of $e$ we
define a map $g:\gamma\Wi e\to Y^{(n-1)}$ as follows. Let $i$ be
such that $\gamma^{-1}\Psi(\gamma\Wi
e)=\gamma_i^{-1}\Psi(\gamma_i\Wi e)$. We define
$g=\gamma\gamma_i^{-1} g_i\gamma_i\gamma^{-1}$. Thus, we define
$g:\Wi X^{(n-1)}\to\Wi Y^{(n-1)}$ in such a way that the difference
map $d_{\Wi f,g}:S^{n-1}\to Y^{(n-1)}$ on the cell $\gamma\Wi e$,
$\gamma\in\pi$, represents the element
$-(\gamma_i\gamma)^{-1}\Psi(\gamma_i\Wi e)=-\Psi(\gamma\Wi e)$. Then
the elementary obstruction theory implies that for every $n$-cell
$\sigma'\subset\Wi X$ there is an extension $\bar
g_{\sigma'}:\overline{\sigma'}\to\Wi Y^{(n-1)}$ of
$g|_{\partial\sigma'}$. For every $n$-cell $\sigma\subset X$ we fix
a lift $\Wi\sigma$. Consider the set of maps
$$\bigcup_{\gamma\in\pi}\{\gamma^{-1}g\gamma|_{\partial\Wi\sigma}:\partial\Wi
\sigma\to Y^{(n-1)}\}\subset C(\partial\Wi\sigma,Y^{(n-1)}).$$ By
the construction of $g$, this set is finite. We fix an extension
$\bar g_{\sigma'}$ for each element of this set and define the
extension $\bar g:X\to Y^{(n-1)}$ of $g$ by translations by $\pi$.
We may assume that all maps $g_i$ and $\bar g_{\sigma'}$ in the
above construction are Lipschitz with the same Lipschitz constant.
We can do that since there are finitely many maps there.

In the other direction, if there is a Lipschitz map $\bar g:\Wi X\to
Y^{(n-1)}$ that coincides with $\Wi f$ on the $(n-2)$-dimensional
skeleton, then the difference cochain $d_{\Wi f,\bar g}$ is almost
equivariant. Indeed, for any $\lambda>0$ there are finitely many
homotopy classes in $\pi_{n-1}(Y)$ can be realized by
$\lambda$-Lipschitz maps. Then the formula $\delta d_{\Wi f,\bar g}=
C_{\bar g}-C_{\Wi f}$ and the fact that $o_{\bar g}=0$ imply that
$o_{\Wi f}=0$.
\end{proof}

Let $[e]\in \pi_{n-1}(\Wi Y^{(n-1)})$ denote the element of the
homotopy group defined by the attaching map of an $n$-cell $e$. Then
the homomorphism $C_{\Wi 1}: C_n(\Wi Y)\to\pi_{n-1}(\Wi Y^{(n-1)})$
defined as $C_{\Wi 1}(e)=[e]$ is an equivariant cocycle with the
cohomology class $o_{\Wi 1}\in H^n_{ae}(\Wi
Y;\pi_{n-1}(Y^{(n-1)}))$.

\begin{prop}\label{obstruction}
(1) The cohomology class $o_{\Wi f}$ from the above Proposition is
the image under $\Wi f^*$ of the class $o_{\Wi 1}\in H^n_{ae}(\Wi
Y;\pi_{n-1}(Y^{(n-1)}))$.

(2) The class $o_{\Wi 1}$ comes under the homomorphism
$pert_{\pi}^*$ from the primary obstruction $\kappa_1\in H^n(Y;
\pi_{n-1}(Y^{(n-1)}))$ to retract $Y$ to the $(n-1)$-dimensional
skeleton.
\end{prop}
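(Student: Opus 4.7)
The plan is to verify both statements by direct cochain-level identifications, then pass to cohomology; neither requires fresh obstruction-theoretic input beyond Definition~\ref{obst} and the classical primary obstruction. Part~(2) is essentially a tautology once $\kappa_1$ is unpacked: under the identification $H^n(Y;\pi_{n-1}(Y^{(n-1)}))=H^n_{\pi}(\Wi Y;\pi_{n-1}(Y^{(n-1)}))$, the primary obstruction $\kappa_1$ to retracting $Y$ onto $Y^{(n-1)}$ is represented on the equivariant cochain level by the cocycle sending each $n$-cell $e'\subset\Wi Y$ to the homotopy class $[e']\in\pi_{n-1}(\Wi Y^{(n-1)})$ of its attaching map, and by Definition~\ref{obst} this is exactly the cocycle $C_{\Wi 1}$. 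Since $C_{\Wi 1}$ is manifestly $\pi$-equivariant and $pert^{*}_{\pi}$ is induced by viewing an equivariant cochain as almost equivariant, one reads off $pert^{*}_{\pi}(\kappa_1)=o_{\Wi 1}$.

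For part~(1), I would verify that $\Wi f^{*}(C_{\Wi 1})=C_{\Wi f}$ holds on the nose as almost equivariant cochains on $C_n(\Wi X)$. The crucial point is the factorization of both sides through the connecting homomorphism $\partial$ of the pair $(\Wi Y^{(n)},\Wi Y^{(n-1)})$: under the canonical identifications $C_n(\Wi Z)\cong\pi_n(\Wi Z^{(n)},\Wi Z^{(n-1)})$ for $Z=X,Y$, the cocycle $C_{\Wi 1}$ is literally $\partial$, the cocycle $C_{\Wi f}$ is $\Wi f_{*}\circ\partial$ (first attach, then push forward), and the pullback $\Wi f^{*}(C_{\Wi 1})$ is $\partial\circ\Wi f_{*}$ (first push forward the chain, then attach). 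Naturality of $\partial$ on the relative homotopy pairs yields $\Wi f_{*}\circ\partial=\partial\circ\Wi f_{*}$, hence $(\Wi f^{*}C_{\Wi 1})(e)=C_{\Wi 1}(\Wi f_{*}(e))=[\Wi f\circ\phi_e]=C_{\Wi f}(e)$ for every $n$-cell $e$ of $\Wi X$. Passing to cohomology gives $o_{\Wi f}=\Wi f^{*}(o_{\Wi 1})$.

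The main bookkeeping point will be verifying that $\Wi f^{*}(C_{\Wi 1})$ is a legitimate almost equivariant cochain, so that the pullback actually produces an element of $H^n_{ae}(\Wi X;\pi_{n-1}(Y^{(n-1)}))$; this is handled by Proposition~\ref{induced1}, since the lift $\Wi f$ is $\pi$-equivariant and hence a fortiori almost equivariant. With this in place, the two cocycle identities above yield exactly the two claimed cohomology identities.
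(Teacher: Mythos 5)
Your proposal is correct and follows the same route the paper indicates, namely that both identities hold already on the cochain level by unpacking Definition~\ref{obst}: part (2) because $C_{\Wi 1}$ is precisely the equivariant cocycle representing $\kappa_1$, and part (1) by the naturality of $\partial$ on relative homotopy pairs, which gives $\Wi f^{*}(C_{\Wi 1})=C_{\Wi f}$ on the nose. The paper's proof is terse (two sentences appealing to ``naturality'' and ``follows from the definition''), and your write-up simply supplies the standard cochain-level verification it alludes to; the only small inexactness is invoking Proposition~\ref{induced1} for almost equivariance of $\Wi f^{*}(C_{\Wi 1})$, when in fact equivariance of $\Wi f$ and of $C_{\Wi 1}$ already makes the pullback genuinely equivariant without needing that proposition.
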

\begin{proof}
The first part is the naturality of obstructions for Lipschitz
extension problems with respect to Lipschitz maps. Like in the case
of classical obstruction theory, it follows from the definition.

The second part follows from definition (see Definition~\ref{obst}).
\end{proof}

\begin{prop}\label{ball-cobound}
Let $f:X\to B\pi$ be a Lipschitz map of a finite complex that
induces an isomorphism of the fundamental groups and let $\Wi f:\Wi
X\to E\pi$ be its lift to the universal coverings. Then for every
$R>0$ the family of preimages of $R$-balls $\{\Wi
f^{-1}(B_{R}(y))\}_{y\in E\pi}$ is uniformly bounded where the
metrics on $\Wi X$ and $E\pi$ are induced from geodesic metrics on
$X$ and $B\pi$.
\end{prop}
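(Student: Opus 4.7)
The plan is to exploit the $\pi$-equivariance of $\Wi f$ and the cocompactness of the $\pi$-action on $\Wi X$ to reduce bounding $\diam\Wi f^{-1}(B_R(y))$ to the properness of the deck action of $\pi$ on $E\pi$. Since $X$ is a finite complex, we may fix a compact fundamental domain $F\subset\Wi X$ for the $\pi$-action, and let $D=\diam F$ and $\Wi x_0\in F$ be a basepoint. Because $f$ is Lipschitz and $\Wi X$, $E\pi$ carry the lifted geodesic metrics, the lift $\Wi f$ is Lipschitz (with the same constant), hence the set $\Wi f(F)\subset E\pi$ is compact, and so is its closed $R$-neighbourhood $K:=\overline{B_R(\Wi f(F))}$ (by local finiteness of $E\pi$ and Hopf--Rinow applied to its geodesic metric where every simplex is standard).

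Now fix $R>0$ and $y\in E\pi$, and take two preimage points $\Wi x_1,\Wi x_2\in\Wi f^{-1}(B_R(y))$. Choose $\gamma_1,\gamma_2\in\pi$ with $\gamma_i^{-1}\Wi x_i\in F$. By $\pi$-equivariance,
\[
\Wi f(\gamma_i^{-1}\Wi x_i)=\gamma_i^{-1}\Wi f(\Wi x_i)\in\gamma_i^{-1}B_R(y)=B_R(\gamma_i^{-1}y),
\]
so $B_R(\gamma_i^{-1}y)\cap\Wi f(F)\ne\emptyset$, whence $\gamma_i^{-1}y\in K$ for $i=1,2$. Consequently
\[
(\gamma_1^{-1}\gamma_2)\cdot(\gamma_2^{-1}y)=\gamma_1^{-1}y\in K,
\]
so $(\gamma_1^{-1}\gamma_2)K\cap K\ne\emptyset$. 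Since the action of $\pi$ on $E\pi$ is proper (it is a free, properly discontinuous deck transformation action) and $K$ is compact, the set $S=\{\gamma\in\pi\mid \gamma K\cap K\ne\emptyset\}$ is finite. Thus $\gamma_1^{-1}\gamma_2\in S$.

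To conclude, estimate
\[
d(\Wi x_1,\Wi x_2)\le d(\Wi x_1,\gamma_1\Wi x_0)+d(\gamma_1\Wi x_0,\gamma_2\Wi x_0)+d(\gamma_2\Wi x_0,\Wi x_2)\le 2D+d(\Wi x_0,\gamma_1^{-1}\gamma_2\Wi x_0),
\]
and the last term is bounded by $M:=\max_{\gamma\in S}d(\Wi x_0,\gamma\Wi x_0)<\infty$. Hence $\diam\Wi f^{-1}(B_R(y))\le 2D+M$, a bound independent of $y$.

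The only delicate point is verifying that one really has a compact fundamental domain in $\Wi X$ and that the closed ball $K$ in $E\pi$ is compact so that properness of the deck action yields a finite $S$; both hinge on $X$ being finite and on the geodesic metric being taken with each simplex standard (guaranteeing local compactness and completeness of $E\pi$). Everything else is essentially bookkeeping with the triangle inequality.
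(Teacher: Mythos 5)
Your proof is correct, but it takes a genuinely more elementary route than the paper. The paper's proof is a one-liner: since $f$ induces an isomorphism on fundamental groups and $X$ is finite, $\Wi f\colon\Wi X\to\Wi f(\Wi X)$ is a quasi-isometry onto its image (a Milnor--\v Svarc-type fact), and a quasi-isometric embedding automatically has uniformly bounded preimages of $R$-balls; then one passes trivially from $y\in\Wi f(\Wi X)$ to arbitrary $y\in E\pi$. Your argument unpacks exactly what makes that quasi-isometry statement work: the cocompactness of the $\pi$-action on $\Wi X$ (compact fundamental domain $F$), the $\pi$-equivariance of $\Wi f$, properness of the metric on $E\pi$ (so the $R$-neighbourhood $K$ of $\Wi f(F)$ is compact), and properness of the deck action (so $S=\{\gamma:\gamma K\cap K\neq\varnothing\}$ is finite), finishing with the triangle inequality. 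What you buy is a self-contained argument with an explicit bound $2D+M$ that does not require the reader to already know that a $\pi$-equivariant Lipschitz map between cocompact $\pi$-spaces inducing an isomorphism on $\pi_1$ is a quasi-isometry onto its image; what the paper buys is brevity by appealing to that standard fact. One small point worth flagging in your write-up: properness of $E\pi$ follows because $B\pi$ is a locally finite complex (this is part of the standing assumption on $B\pi$ for finitely presented $\pi$), covering maps preserve local finiteness, and the piecewise-standard geodesic metric on a connected locally finite complex is complete and locally compact, so Hopf--Rinow applies, exactly as you indicate.
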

\begin{proof}
Since $\Wi F\Wi X\to\Wi f(\Wi X)$ is a quasi-isometry for the subset
metric on $\Wi f(\Wi X)$, the family $\{\Wi f^{-1}(B_{R}(y))\}_{y\in
\Wi f(\Wi X)}$ is uniformly cobounded. Then, clearly, the family
$\{\Wi f^{-1}(B_{R}(y))\}_{y\in E\pi}$ is uniformly bounded.
\end{proof}

\begin{thm}\label{obstr-dim}
Let $X$ be a finite $n$-complex with $\pi_1(X)=\pi$ and let $f:X\to
B\pi$ be a Lipschitz map that induces an isomorphism of the
fundamental groups. Then $\dim_{MC}\Wi X<n$, $n\ge 3$, if and only
if the above obstruction is trivial, $o_{\Wi f}=0$.
\end{thm}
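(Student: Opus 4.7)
The plan is to exploit the obstruction-theoretic machinery set up in the previous propositions, once in each direction.

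For the forward implication, I will assume $\dim_{MC}\Wi X<n$ and apply Proposition~\ref{sweep} to produce a Lipschitz cellular homotopy $H:\Wi X\times[0,1]\to E\pi$ starting at $\Wi f$ and ending at some cellular map $g:\Wi X\to E\pi^{(n-1)}$. Since $\Wi f$ is strictly $\pi$-equivariant (hence almost equivariant), Proposition~\ref{uniform=almost} promotes $H$ to an almost equivariant homotopy; in particular $g$ is almost equivariant. Proposition~\ref{induced2}(B) then gives $\Wi f^{*}=g^{*}$ on $H^{*}_{ae}$, so by Proposition~\ref{obstruction}(1),
$$o_{\Wi f}=\Wi f^{*}(o_{\Wi 1})=g^{*}(o_{\Wi 1}).$$
But $g$ is cellular with image in $E\pi^{(n-1)}$, so the induced chain homomorphism $g_{*}$ annihilates $C_{n}(\Wi X)$; consequently the representing cocycle $C_{\Wi 1}$ pulls back to the zero cochain and $o_{\Wi f}=0$.

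For the reverse implication, assume $o_{\Wi f}=0$ and apply Proposition~\ref{obstructiontheory} to obtain a Lipschitz map $\bar g:\Wi X\to E\pi^{(n-1)}$ agreeing with $\Wi f$ on the $(n-2)$-skeleton $\Wi X^{(n-2)}$. To conclude $\dim_{MC}\Wi X\le n-1$ it remains to verify that $\bar g$ is uniformly cobounded. Since $X$ is finite and $n\ge 3$, the skeleton $\Wi X^{(n-2)}$ is $C$-dense in $\Wi X$ for some $C>0$ (each cell of $\Wi X$ has uniformly bounded diameter, so every point in an $n$- or $(n-1)$-cell is within a bounded distance of its boundary). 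Both $\bar g$ and $\Wi f$ are Lipschitz and agree on this $C$-dense subset, which yields a uniform bound $M$ on $d(\bar g(x),\Wi f(x))$. Proposition~\ref{ball-cobound} says $\Wi f$ is uniformly cobounded, and the inclusion
$$\bar g^{-1}(B_{R}(y))\subseteq \Wi f^{-1}(B_{R+M}(y))$$
shows that $\bar g$ is as well. Thus $\bar g$ witnesses $\dim_{MC}\Wi X\le n-1<n$.

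The main delicacy lies in the forward direction: the obstruction class must be seen to be preserved under merely almost equivariant homotopies, since the map $g$ produced by Proposition~\ref{sweep} is only almost, not strictly, equivariant. This is precisely what the combination of Propositions~\ref{uniform=almost} and~\ref{induced2}(B) provides. Once this point is settled, the computation $g^{*}(C_{\Wi 1})=0$ is immediate from cellularity of $g$ into $E\pi^{(n-1)}$, and the rest is bookkeeping.
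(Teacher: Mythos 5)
Your proof is correct and follows essentially the same two-step strategy as the paper: in the forward direction, both arguments combine Proposition~\ref{sweep}, Proposition~\ref{uniform=almost}, and Proposition~\ref{induced2}(B) to replace $\Wi f$ by an almost equivariant map $g$ into $E\pi^{(n-1)}$ and then observe that the obstruction class pulls back to zero because $E\pi^{(n-1)}$ has no $n$-cells (you phrase this as $g_*$ annihilating $C_n$, the paper as $i^*(o_1)=0$; the two are equivalent at the cochain level). In the reverse direction you and the paper both invoke Proposition~\ref{obstructiontheory} to obtain a Lipschitz $\bar g$ into the $(n-1)$-skeleton agreeing with $\Wi f$ on $\Wi X^{(n-2)}$, then use the Lipschitz bounds and the coboundedness in Proposition~\ref{ball-cobound} to conclude $\bar g$ is uniformly cobounded; you argue via a uniform displacement bound $d(\bar g(x),\Wi f(x))\le M$ while the paper gives the explicit estimate $g^{-1}(y)\subset\Wi f^{-1}(B_{3\lambda}(y))$, but these are the same idea.
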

\begin{proof}
If $\dim_{MC}\Wi X<n$, then by Proposition~\ref{sweep} there is a
Lipschitz cellular homotopy of $\Wi f:\Wi X\to E\pi$ to a map $g:\Wi
X\to E\pi^{(n-1)}$. By Proposition~\ref{uniform=almost}, the map $g$
is almost equivariant. Then by Proposition~\ref{induced2}, $o_{\Wi
f}=\Wi f^*(o_1)=g^*i^*(o_1)=0$ where $i:E\pi^{(n-1)}\subset E\pi$ is
the inclusion.

We assume that $B\pi$ is a locally finite simplicial complex. If
$o_{\Wi f}=0$, then by Proposition~\ref{obstructiontheory} there is
a $\lambda$-Lipschitz map $g:\Wi X\to E\pi^{(n-1)}$ for some
$\lambda$ which agrees with $\Wi f$ on the $(n-2)$-skeleton
$X^{(n-2)}$. We may assume that $f$ is also $\lambda$-Lipschitz.
Additionally, we assume that the diameter of each cell in $X$ is
less than 1. We show that the map $g$ is uniformly cobounded. In
view of Proposition~\ref{ball-cobound} it suffices to show that
$g^{-1}(y)\subset \Wi f^{-1}(B_{3\lambda}(y))$ for all $y\in E\pi$.
Let $x\in g^{-1}(y)$ and let $x\in e$ where $e$ is a cell in $\Wi
X$. Since $X$ is $n$-dimensional, there is a point $v\in
X^{(n-2)}\cap St(\bar e)$. Then $d_{\Wi X}(x,v)<2$ and
$d(g(x),g(v))<2\lambda$ where $d$ is the metric on $E\pi$ induced
from a proper geodesic metric on $B\pi$. Thus, $d_{\Wi Y}(y,\Wi
f(v))<2\lambda$. Since the map $\Wi f$ is $\lambda$-Lipschitz, by
the triangle inequality and the fact that $\Wi f(v)=g(v)$,
$$d(y,\Wi f(x))<d(y,g(v))+d(\Wi f(v),\Wi f(x))< 3\lambda,$$ i.e.,
$\Wi f(x)\in B_{3\lambda}(y)$. Therefore, $x\in\Wi
f^{-1}(B_{3\lambda}(y))$.
\end{proof}

\section{Homology of groups}

Let $I$ denote the augmentation ideal of the group ring $\Z\pi$. We
recall that the {\em Berstein-\v Svarc class} $\beta=\beta_{\pi}\in
H^1(\pi;I)$ is the first obstruction to the lift of $B\pi$ to $E\pi$
(see \cite{Sv} and \cite{DR}). The following is called the
Universality Theorem and it is stated  without proof in \cite{Sv}. A
proof can be found in \cite{DR}.
\begin{thm}\label{univ}
For every $\pi$-module $L$ and every cohomology class $\alpha\in
H^k(\pi;L)$ there is a $\pi$-homomorphism $I^k\to L$ that takes
$\beta^k$ to $\alpha$.
\end{thm}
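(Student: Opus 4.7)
The plan is to interpret $H^k(\pi;L)=\operatorname{Ext}^k_{\Z\pi}(\Z,L)$ and realize $\beta^k$ through a concrete partial free resolution of $\Z$ whose $k$-th syzygy is $I^k$. Tensoring the augmentation extension $0\to I\to\Z\pi\to\Z\to 0$ over $\Z$ with $I^j$ (equipped with the diagonal $\pi$-action) yields short exact sequences
\[
0\to I^{j+1}\to I^j\otimes\Z\pi\to I^j\to 0,\qquad j=0,1,\dots,k-1.
\]
Each middle term $I^j\otimes\Z\pi$ is a free $\Z\pi$-module via the ``untwisting'' isomorphism $x\otimes g\mapsto g^{-1}x\otimes g$, which converts the diagonal action into an action on the right-hand tensor factor only. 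Splicing the $k$ sequences produces a partial free resolution
\[
0\to I^k\to I^{k-1}\otimes\Z\pi\to\cdots\to I\otimes\Z\pi\to\Z\pi\to\Z\to 0.
\]

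Next I would perform dimension shifting. The vanishing $\operatorname{Ext}^i(I^j\otimes\Z\pi,L)=0$ for $i\ge 1$, coupled with the long exact $\operatorname{Ext}$-sequences coming from each of the short exact sequences above, provides connecting isomorphisms $\operatorname{Ext}^{i-1}(I^{j+1},L)\cong\operatorname{Ext}^i(I^j,L)$ for $i\ge 2$ together with a surjection $\Hom_\pi(I^{j+1},L)\twoheadrightarrow\operatorname{Ext}^1(I^j,L)$ at the bottom. Stringing them together yields a surjective composite
\[
\Hom_\pi(I^k,L)\twoheadrightarrow\operatorname{Ext}^1(I^{k-1},L)\stackrel{\sim}{\to}\cdots\stackrel{\sim}{\to}\operatorname{Ext}^k(\Z,L)=H^k(\pi;L),
\]
so every cohomology class $\alpha$ lifts to some $\pi$-homomorphism $\phi\colon I^k\to L$.

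It remains to identify this surjection with the assignment $\phi\mapsto\phi_*(\beta^k)$. Since $\beta\in\operatorname{Ext}^1(\Z,I)$ is by definition the class of the augmentation extension, the first-step connecting homomorphism $\Hom_\pi(I,L)\to\operatorname{Ext}^1(\Z,L)$ sends $\phi\mapsto\phi_*(\beta)$ by general principles of $\operatorname{Ext}$. For the iterated case I would invoke the standard compatibility between the cup product on $H^*(\pi;-)$ and the Yoneda splice of extensions (tensored appropriately in the coefficient module), which identifies the spliced $k$-extension constructed above with $\beta^k\in\operatorname{Ext}^k(\Z,I^k)$. Naturality of the connecting homomorphism under the coefficient map $\phi\colon I^k\to L$ then gives $\alpha=\phi_*(\beta^k)$.

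The main obstacle is this last identification: carrying the diagonal $\pi$-actions through the iterated splice and matching the resulting Yoneda class with the cup product $\beta^k$ requires some bookkeeping. Once that compatibility is in hand, the construction of the partial resolution and the dimension-shifting argument are essentially formal.
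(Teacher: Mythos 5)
The paper does not contain its own proof of this theorem; it attributes the statement to \v Svarc and refers to \cite{DR} for a proof, so the comparison must be against that reference, which your proposal essentially reproduces. The partial free $\Z\pi$-resolution built from the untwisting isomorphism, the dimension-shifting surjection $\Hom_\pi(I^k,L)\twoheadrightarrow H^k(\pi;L)$, and the identification of this surjection with $\phi\mapsto\phi_*(\beta^k)$ are exactly the ingredients used there. The one step you flag as the remaining obstacle --- that the spliced $k$-extension represents $\beta^k\in\operatorname{Ext}^k_{\Z\pi}(\Z,I^k)$, equivalently that $\id_{I^k}$ is a $k$-cocycle for $\beta^k$ in the resolution you built --- is indeed the only nontrivial content. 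It follows by induction on $k$: the connecting map sends $\id_I$ to the class of the augmentation extension, which is $\beta$ by definition, and cupping with a degree-one class on $H^*(\pi;-)$ corresponds to tensoring the extension over $\Z$ by the new coefficient module and then splicing. Thus $\beta^k=\beta^{k-1}\smile\beta$ is the splice of $E_{k-1}\otimes I$ with the augmentation extension, which is precisely the $k$-extension you constructed. Filling that in completes your argument.
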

Here $\beta^k=\beta\smile\dots\smile\beta$ is the $k$ times cup
product and $I^k=I\otimes\dots\otimes I$ is the $k$ times tensor
product over $\Z$. We recall that the cup product $x\smile  y$ of
classes $x\in H^*(X;A)$ and $y\in H^*(X;B)$ is defined for any
modules $A$ and $B$ with $x\smile y\in H^*(X;A\otimes B)$ \cite{Br}.

Let $f:M\to B\pi$ be a map that induces an isomorphism of the
fundamental groups. The image $f^*(\beta_{\pi})\in H^1(M;I)$ of the
Berstein-\v Svarc class of $\pi$ is denoted by $\beta_M$ and is
called Berstein-\v Svarc class of $M$.

We will use the notations $H^*_{ae}(\pi;L)$, $H_*^{lf,ae}(\pi;L)$,
$pert^*_{\pi}$ and $pert_*^{\pi}$ for $H^*_{ae}(E\pi;L)$,
$H_*^{lf,ae}(E\pi;L)$, $pert^*_{B\pi}$ and $pert_*^{B\pi}$
respectively. Also we will use the notation $H_*(\pi)$ for
$H_*(\pi;\Z)$.

\begin{thm}\label{small}
For a closed oriented $n$-manifold $M$ the following are equivalent:

1. $\dim_{MC}\Wi M<n$;

2. $f_*([M])\in ker(pert_*^{\pi})$ where $f:M\to B\pi$ is the map
classifying the universal covering $\Wi M$ of $M$.

3. $(\beta_M)^n\in ker(pert^*_M)$.
\end{thm}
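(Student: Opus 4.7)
The plan is to prove the three-way equivalence by first establishing $(1)\Leftrightarrow(3)$ via obstruction theory combined with the Universality Theorem~\ref{univ}, then $(1)\Rightarrow(2)$ as the homological analogue of the same deformation, and finally $(2)\Rightarrow(3)$ through Poincar\'e duality (Proposition~\ref{PD}).

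For $(1)\Leftrightarrow(3)$ I would apply Theorem~\ref{obstr-dim} to the classifying map $f:M\to B\pi$, which identifies $(1)$ with the vanishing of the obstruction $o_{\Wi f}\in H^n_{ae}(\Wi M;\pi_{n-1}(B\pi^{(n-1)}))$. Proposition~\ref{obstruction} writes $o_{\Wi f}=pert^*_M(f^*\kappa_1)$ with $\kappa_1$ the primary obstruction to retract $B\pi$ onto its $(n-1)$-skeleton, and Theorem~\ref{univ} yields a $\pi$-homomorphism $\phi:I^n\to\pi_{n-1}(B\pi^{(n-1)})$ sending $\beta_\pi^n$ to $\kappa_1$; naturality of the perturbation then gives $o_{\Wi f}=\phi_*(pert^*_M(\beta_M^n))$, which immediately supplies $(3)\Rightarrow(1)$. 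For the converse, Proposition~\ref{sweep} furnishes a Lipschitz deformation of $\Wi f$ to some $g:\Wi M\to E\pi^{(n-1)}$; by Proposition~\ref{uniform=almost} the homotopy, and hence $g$, is almost equivariant, and Proposition~\ref{induced2}(B) delivers $pert^*_M(\beta_M^n)=\Wi f^*(pert^*_\pi(\beta_\pi^n))=g^*i^*(pert^*_\pi(\beta_\pi^n))$, vanishing because $H^n_{ae}(E\pi^{(n-1)};I^n)=0$ (no $n$-cells in the $(n-1)$-skeleton).

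The direction $(1)\Rightarrow(2)$ is the dual argument: Proposition~\ref{h-induced}(B) applied to the same deformation gives $pert_*^\pi(f_*[M])=i_*g_*(pert_*^M[M])\in H_n^{lf,ae}(E\pi^{(n-1)};\Z)=0$. For $(2)\Rightarrow(3)$ I would use Poincar\'e duality: Proposition~\ref{PD} identifies $(3)$ with $pert_*^M(\beta_M^n\cap[M])=0$ in $H_0^{lf,ae}(\Wi M;I^n)$, and the projection formula for the cap product combined with $\beta_M^n=f^*\beta_\pi^n$ yields the chain of equalities $\Wi f_*pert_*^M(\beta_M^n\cap[M])=pert_*^\pi(\beta_\pi^n\cap f_*[M])=pert^*_\pi(\beta_\pi^n)\cap pert_*^\pi(f_*[M])$, which vanishes under hypothesis $(2)$. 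The main obstacle will be closing this final step: since $\Wi f_*$ need not be injective in general, the conclusion requires a direct degree-zero comparison showing that on connected free $\pi$-spaces $H_0^{lf,ae}(-;L)$ depends only on the orbit structure, so that $pert_*^M$ and $pert_*^\pi$ agree in degree zero under the canonical identification with $L\otimes_\pi\Z$ and the vanishing after $\Wi f_*$ already forces $pert_*^M(\beta_M^n\cap[M])=0$.
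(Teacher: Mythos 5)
Your overall strategy matches the paper's: a cycle of implications built from Theorem~\ref{obstr-dim}, Proposition~\ref{obstruction}, the Universality Theorem~\ref{univ}, Propositions~\ref{sweep}, \ref{uniform=almost}, \ref{induced2}/\ref{h-induced}, and Poincar\'e duality (Proposition~\ref{PD}). Your $(3)\Rightarrow(1)$ is the paper's, and your $(1)\Rightarrow(2)$ likewise mirrors the paper's argument (deform $\Wi f$ into $E\pi^{(n-1)}$, use almost equivariance and the vanishing of degree-$n$ almost equivariant homology of the $(n-1)$-skeleton). You add a redundant direct $(1)\Rightarrow(3)$, which is a clean variation not present in the paper; it is logically superfluous but correct.

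The one genuine gap is exactly where you flag it, in $(2)\Rightarrow(3)$. After the projection formula and compatibility of cap product with $pert$, you arrive at $\bar f_*\, pert_*^M\bigl(\beta_M^n\cap[M]\bigr)=0$ in $H_0^{lf,ae}(E\pi;I^n)$ and must deduce $pert_*^M\bigl(\beta_M^n\cap[M]\bigr)=0$. Your proposed resolution---that $H_0^{lf,ae}(-;L)$ on connected free $\pi$-spaces ``depends only on the orbit structure'' and identifies with $L\otimes_\pi\Z$---is not a proof, and the suggested identification with $L\otimes_\pi\Z$ is wrong in general (for $L=\Z$ this would say $H_0^{lf,ae}(\Wi K;\Z)\cong\Z$, contradicting Theorem~\ref{BW}, which makes it vanish for non-amenable $\pi$). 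What you actually need is injectivity of $\bar f_*$ on $H_0^{lf,ae}$, and the paper obtains this cheaply: choose the CW structure on $B\pi$ so that $B\pi^{(1)}=M^{(1)}$ and $f$ restricts to a homeomorphism of $1$-skeletons; then $\bar f$ is a homeomorphism of $1$-skeletons of universal covers, so the chain maps $C_0^{lf,ae}$, $C_1^{lf,ae}$ are isomorphisms and $\bar f_*$ is an isomorphism on $H_0^{lf,ae}$. Replacing your heuristic by this explicit choice of CW structure closes the gap and completes the proof.
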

\begin{proof}
1. $\Rightarrow$ 2. Let $f:M\to B\pi$ be a cellular Lipschitz map
classifying the universal cover $\Wi M$ of $M$ and let $\Wi f:\Wi
M\to E\pi$ be a lift. If $\dim_{MC}\Wi M<n$, then by
Proposition~\ref{sweep} there is a Lipschitz cellular homotopy of
$\bar f:\Wi X\to E\pi$ to a map $g:\Wi X\to E\pi^{(n-1)}$. By
Proposition~\ref{uniform=almost}, it is almost equivariant. Then by
Proposition~\ref{induced2} it follows that $\Wi
f_*(pert_*^M([M]))=0$. Therefore, $pert_*^{\pi}(f_*([M]))=0$ and
hence, $f_*([M])\in ker(pert_*^{\pi})$.

2. $\Rightarrow $ 3. If $f_*([M])\in ker(pert_*^{\pi})$, then
$pert_*^{\pi}(f_*([M])\cap\beta^n)=0$. Since we may assume that the
restriction of $f$ to the 1-skeleton of $M$ is a homeomorphism of
1-skeletons, the commutative diagram
$$
\begin{CD}
H_0^{lf,ae}(\Wi M;I^n) @>\bar f_*>> H_0^{lf,ae}(E\pi;I^n)\\
@Apert_*^MAA @Apert_*^{\pi}AA\\
H_0(M;I^n) @>f_*>> H_0(B\pi;I^n)\\
\end{CD}
$$
has isomorphisms for horizontal arrows. Therefore, $pert_*^M([M]\cap
(f^*\beta)^n)=0$. Thus, $pert_*^M([M])\cap
pert^*_M((f^*\beta)^n)=0$. By the Poincare Duality,
$pert^*_M((\beta_M)^n)=0$.

3. $\Rightarrow$ 1. By Proposition~\ref{obstruction},  $o_{\Wi
f}=pert^*_M(f^*(\kappa_1)$. By the Universality Theorem there is a
coefficient homomorphism $\psi:I^n\to\pi_{n-1}(B\pi^{(n-1)})=L$ such
that the induced homomorphism of the $n$th cohomology groups takes
$\beta^n$ to $\kappa_1$. Therefore, $\psi$ induces the commutative
diagram
$$
\begin{CD}
H^n_{ae}(\Wi M;I^n) @>\psi_*>> H^n_{ae}(\Wi M;L)\\
@ApertAA @ApertAA\\
H^n(M;I^n) @>\psi_*>> H^n(M;L)\\
\end{CD}
$$
where $$o_{\Wi
f}=pert^*_M(f^*(\kappa_1))=pert^*_M(\psi_*(\beta_M)^n))=\psi_*(pert^*_M((\beta_M)^n)=0.$$
Then by Theorem~\ref{obstr-dim} $\dim_{MC}\Wi M<n$.
\end{proof}

We note that the subset of $n$-homology classes of $H_n(\pi)$ which
can be realized by an $n$-manifolds forms a subgroup. We denote this
subgroup by $RH_n(\pi)$ and call it the {\em representable
$n$-homology group}. Using the surgery one can show that for $n\ge
4$ a realization $f:M\to B\pi$ of a given class from $RH_n(\pi)$ can
be taken such that $f$ induces an isomorphism of the fundamental
groups.

\begin{defin} We define the group of small macroscopic dimension
classes as $H^{sm}_n(\pi)=ker(pert_*^{\pi})\cap RH_n(\pi)\subset
H_n(\pi)$.

\end{defin}

\begin{cor}\label{smallhom}
For a closed orientable $n$-manifold the following are equivalent:

(1) $\dim_{MC}\Wi M<n$;

(2) $f_*([M])\in H^{sm}_n(\pi)$.
\end{cor}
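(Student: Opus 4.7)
This corollary is an immediate repackaging of the equivalence $(1)\Leftrightarrow(2)$ in Theorem~\ref{small}, combined with the definitional observation that any classifying-map image of a manifold fundamental class already lies in $RH_n(\pi)$. I plan to carry it out in two short steps.

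First, I would observe that for any closed orientable $n$-manifold $M$ with classifying map $f:M\to B\pi$, the class $f_*([M])$ automatically belongs to $RH_n(\pi)$, since by definition $RH_n(\pi)$ is precisely the subgroup of $H_n(\pi)$ consisting of classes realizable in exactly this way (as the image of a fundamental class of a closed orientable $n$-manifold under a classifying map). Hence for such an $M$, membership in $H^{sm}_n(\pi)=\ker(pert_*^{\pi})\cap RH_n(\pi)$ is equivalent to membership in $\ker(pert_*^{\pi})$ alone.

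Second, I would invoke the already-established equivalence $(1)\Leftrightarrow(2)$ of Theorem~\ref{small}, which gives $\dim_{MC}\Wi M<n$ if and only if $f_*([M])\in\ker(pert_*^{\pi})$. Chaining these two equivalences yields both implications of the corollary simultaneously: if $\dim_{MC}\Wi M<n$, then $f_*([M])\in\ker(pert_*^{\pi})$ by Theorem~\ref{small}, and since $f_*([M])\in RH_n(\pi)$ automatically, $f_*([M])\in H^{sm}_n(\pi)$; conversely, $f_*([M])\in H^{sm}_n(\pi)$ forces $f_*([M])\in\ker(pert_*^{\pi})$, which by Theorem~\ref{small} gives $\dim_{MC}\Wi M<n$.

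There is no genuine technical obstacle in this deduction: all the substantive content --- the translation of the macroscopic dimension inequality into the vanishing of the perturbation image, via the obstruction-theoretic machinery of Theorem~\ref{obstr-dim} and the Universality Theorem~\ref{univ} --- is already encoded in Theorem~\ref{small}. The corollary merely records the convenient fact that intersecting with $RH_n(\pi)$ in the definition of $H^{sm}_n(\pi)$ is redundant as soon as the class in question is known to come from a manifold, which streamlines the language for subsequent applications.
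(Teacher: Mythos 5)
Your proof is correct and matches the paper's intent exactly: the corollary is stated without proof precisely because it is the immediate consequence you describe, combining the equivalence $(1)\Leftrightarrow(2)$ of Theorem~\ref{small} with the observation that $f_*([M])$ lies in $RH_n(\pi)$ by the very definition of that subgroup, so that intersecting with $RH_n(\pi)$ in the definition of $H^{sm}_n(\pi)$ imposes no extra condition here.
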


We conclude this section with the following observation about
homologies of a group.

\begin{thm}\label{shift}
For every group $\pi$ and any $n>0$,
$$H_n(\pi)=H_{n-1}(\pi;I)=H_{n-2}(\pi;I^2)=\dots=H_1(\pi;I^{n-1})$$ and
$$ H_n(\pi)=ker\{H_0(\pi;I^n)\to H_0(\pi;I^{n-1}\otimes\Z\pi)\}$$ or
to state the same differently,
$$H_n(\pi)=ker\{I^n\otimes_{\pi}\Z
\stackrel{(1\otimes i)\otimes
1}\longrightarrow(I^{n-1}\otimes\Z\pi)\otimes_{\pi}\Z\}$$ where
$i:I\to\Z\pi$ is the imbedding.
\end{thm}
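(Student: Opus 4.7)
The plan is to run a standard dimension-shifting argument using the augmentation sequence $0\to I\to\Z\pi\to\Z\to 0$. Since every $I^k$ sits inside the free abelian group $(\Z\pi)^{\otimes k}$ and is therefore $\Z$-free, tensoring this short exact sequence over $\Z$ with $I^k$ (and equipping the result with the diagonal $\pi$-action) yields, for every $k\ge 0$, a short exact sequence of $\pi$-modules
\[
0 \to I^{k+1} \to I^k\otimes\Z\pi \to I^k \to 0.
\]
This is the engine of the whole proof.

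The main step is to observe that the middle term $I^k\otimes\Z\pi$ with the diagonal action is free as a $\Z\pi$-module, hence $H_*(\pi;-)$-acyclic. Indeed, the $\Z$-linear ``shearing'' map $m\otimes h\mapsto h^{-1}m\otimes h$ intertwines the diagonal action on $I^k\otimes\Z\pi$ with the action in which $\pi$ acts trivially on $I^k$ and by left multiplication on $\Z\pi$; the latter makes $I^k\otimes\Z\pi$ a free $\Z\pi$-module on any $\Z$-basis of $I^k$. Consequently $H_i(\pi;I^k\otimes\Z\pi)=0$ for every $i\ge 1$, and the long exact sequence in group homology produces connecting isomorphisms
\[
\partial\colon H_i(\pi;I^k) \xrightarrow{\ \cong\ } H_{i-1}(\pi;I^{k+1}) \qquad (i\ge 2).
\]
Iterating this starting from $H_n(\pi;\Z)=H_n(\pi;I^0)$ gives the chain
\[
H_n(\pi) \;\cong\; H_{n-1}(\pi;I) \;\cong\; H_{n-2}(\pi;I^2) \;\cong\; \dots \;\cong\; H_1(\pi;I^{n-1}),
\]
which is the first assertion.

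For the second assertion I would read one more step off the same long exact sequence, in the case $k=n-1$, $i=1$. Since $H_1(\pi;I^{n-1}\otimes\Z\pi)=0$, we obtain the exact fragment
\[
0 \to H_1(\pi;I^{n-1}) \to H_0(\pi;I^n) \to H_0(\pi;I^{n-1}\otimes\Z\pi),
\]
in which the second arrow is induced by the inclusion $1\otimes j\colon I^{n-1}\otimes I \to I^{n-1}\otimes\Z\pi$. Identifying $H_0(\pi;M)=M\otimes_\pi\Z$ and combining with the chain of isomorphisms above then yields
\[
H_n(\pi) \;=\; \ker\bigl\{I^n\otimes_\pi\Z \xrightarrow{(1\otimes j)\otimes 1} (I^{n-1}\otimes\Z\pi)\otimes_\pi\Z\bigr\},
\]
as required.

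The only real subtlety, and therefore the principal obstacle, is the free/acyclic identification of $I^k\otimes\Z\pi$ equipped with the diagonal action. Once the shearing isomorphism is written down and verified to be $\pi$-equivariant, everything else is routine bookkeeping with the long exact sequence of group homology.
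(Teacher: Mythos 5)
Your argument is correct and follows the same dimension-shifting strategy as the paper: iterate the short exact sequence $0 \to I^{k+1} \to I^k\otimes\Z\pi \to I^k \to 0$ obtained from the augmentation sequence, use vanishing of higher homology with coefficients in the middle term, and then read the kernel description off one more step of the long exact sequence together with $H_0(\pi;L)=L\otimes_\pi\Z$. The only difference is cosmetic: the paper asserts the acyclicity of $I^k\otimes\Z\pi$ by citing \cite{Br} and \cite{DR} for projectivity, whereas you supply a self-contained (and slightly stronger) proof of its freeness via the shearing isomorphism $m\otimes h\mapsto h^{-1}m\otimes h$.
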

\begin{proof}
The first chain of equalities follow from the homology long exact
sequence defined by the short exact sequence of coefficients
$$ 0\to I^k\to I^{k-1}\otimes\Z\pi\to I^{k-1}\to 0$$
and the fact that $H_i(B\pi;I^{k-1}\otimes\Z\pi)=0$ for $i>0$. The
latter is due to the facts that the reduced homologies of a group
with coefficients in a projective module are trivial \cite{Br} and
the modules $I^{k-1}\otimes\Z\pi$ are projective \cite{DR}. Here we
use the convention $I^0=\Z$.

The second equality follows from the facts that
$H_n(\pi)=H_1(\pi;I^{n-1})$ and the remainder of the coefficients
exact sequence is
$$0\to H_1(\pi;I^{n-1})\to H_0(\pi;I^n) \to
H_0(\pi; I^{n-1}\otimes\Z\pi)\to H_0(\pi;I^{n-1}).$$ The last
equality follows from definition of 0-dimensional homology:
$$H_0(\pi;L)=coinv(L)=L\otimes_{\pi}\Z$$ for every $\pi$-module $L$.
\end{proof}

\section{Uniformly finite homology}
Let $X$ be a uniform simplicial complex. Block and Weinberger
introduced the uniformly finite homology groups $H^{uf}_n(X;\Z)$ as
the homology groups of  the chain complex of bounded infinite chains
$$C^{uf}_k=\{\sum_{\sigma\in E_k(X)} n_{\sigma}\sigma,\ \  k\in\N,\ \  n_{\sigma}\in\Z,\ \
 \max\{|n_{\sigma}|\}<\infty\}$$ where
$\sigma$ runs over all $k$-simplices of $X$. They defined the
uniformly finite homology for general metric spaces and we refer to
\cite{BW} for the precise definition in the general case.

REMARK 1. Let $\Wi K$ be the universal covering of a finite
simplicial complex $K$ with the fundamental group $\pi$. We assume
that $\Wi K$ is given the metric lifted from $K$.  When $L=\Z$  is a
trivial $\pi$-module, the almost equivariant locally finite homology
groups $H_*^{lf,ae}(\Wi K;L)$ coincide with the uniformly finite
homology $H^{uf}_*(\Wi K;\Z)$.

 The following theorem is due to Block and Weinberger \cite{BW}.
\begin{thm}\label{BW}
For a finite complex $K$, $H_0^{uf}(\Wi K;\Z)=0$ if and only if
$\pi_1(K)$ is not amenable.
\end{thm}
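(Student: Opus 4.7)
The plan is to adapt the Block--Weinberger proof. Note first that for a finite complex $K$ with fundamental group $\pi$, the universal cover $\Wi K$ with its lifted metric is quasi-isometric to the Cayley graph of $\pi$ (with any finite generating set), and $H_0^{uf}(-;\Z)$ is a quasi-isometry invariant among uniformly locally finite metric spaces. So it suffices to establish the equivalence for the Cayley graph $\Gamma$ of $\pi$: $H_0^{uf}(\Gamma;\Z)=0$ if and only if $\pi$ is non-amenable.

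For the amenable direction I would construct a nonzero class by hand. Fix a F\o lner sequence $\{F_n\}$ with $|\partial F_n|/|F_n|\to 0$ and a non-principal ultrafilter $\omega$ on $\N$. Define a functional $\mu\colon C_0^{uf}(\Gamma;\Z)\to\R$ by $\mu(c)=\lim_\omega\frac{1}{|F_n|}\sum_{v\in F_n}c_v$, which makes sense because $c$ is bounded. For any bounded 1-chain $\sigma$ the sum $\sum_{v\in F_n}(\partial\sigma)_v$ picks up contributions only from edges crossing $\partial F_n$, and hence is bounded by a constant times $\|\sigma\|_\infty\cdot|\partial F_n|$; dividing by $|F_n|$ and passing to the ultralimit gives $\mu(\partial\sigma)=0$. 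So $\mu$ descends to a functional on $H_0^{uf}$, and since $\mu([\sum_v v])=1$, the class is nonzero.

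For the non-amenable direction the main ingredient is the strong isoperimetric inequality: there exists $\epsilon>0$ with $|\partial A|\geq\epsilon|A|$ for every finite $A\subset\Gamma$. Given a bounded 0-chain $c=c_+-c_-$ with $|c_v|\le M$, I would apply Hall's infinite marriage theorem to the bipartite graph whose two sides consist of $|c_\pm(v)|$ copies of each $v\in\Gamma$, with an edge joining a copy at $v$ on the left to a copy at $w$ on the right whenever $d(v,w)\leq R$. The isoperimetric inequality, iterated, produces an $R$ for which $R$-neighborhoods satisfy the expansion condition $|N_R(A)|\geq(1+\delta)|A|$, which in turn yields Hall's condition on both sides. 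Each matched pair $(v,w)$ contributes a geodesic 1-chain of length $\leq R$ whose boundary is $v-w$. Summing all these paths with appropriate signs produces a 1-chain $\sigma$ with $\partial\sigma=c$, and the uniform boundedness of $\|\sigma\|_\infty$ follows from bounded geometry and the uniform length bound $R$ on the matched paths.

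The hard part is this non-amenable direction, specifically obtaining uniformly bounded coefficients on $\sigma$ from the matching. One must verify both that the expansion hypothesis of Hall's theorem is satisfied (a relatively direct iteration of the isoperimetric bound) and that each vertex is used in at most a bounded number of the geodesic paths in the final sum. The latter requires choosing the matching carefully, for example by constructing matchings on an exhaustion of $\Gamma$ by finite sets and invoking Rado's selection principle to pass to an infinite matching, keeping multiplicities controlled by the bounded-geometry constant and the scale $R$.
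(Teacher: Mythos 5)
The paper gives no proof of this statement: it is quoted verbatim as a theorem of Block and Weinberger \cite{BW}, so the content of a proof review here is whether your reconstruction of their argument is sound. Your reduction to the Cayley graph via the Milnor--\v Svarc lemma and quasi-isometry invariance of $H_0^{uf}$ is fine, and the amenable direction (F\o lner sequence, ultralimit functional $\mu$ killing boundaries, $\mu([\sum_v v])=1$) is a correct account of the standard argument.

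The non-amenable direction, however, has a genuine gap. You propose to write an arbitrary bounded $0$-chain as $c=c_+-c_-$ and match copies of $c_+$ against copies of $c_-$ at bounded distance via Hall's theorem, so that the matched pairs cobound. But for the very chain one most needs to kill, the fundamental class $c=\sum_v v$, one has $c_-=0$: the right side of your bipartite graph is empty and Hall's condition fails trivially, so no matching exists. The same failure occurs for any chain with non-negative coefficients, i.e. precisely the unbalanced chains whose vanishing is the real content of the theorem. A matching between $c_+$ and $c_-$ can at best prove that two chains of comparable local mass are homologous; it cannot push positive mass off to infinity, which is what non-amenability must be used for. The Block--Weinberger/Whyte construction instead iterates the isoperimetric inequality to get $|N_R(A)|\ge 2|A|$, applies Hall's theorem to the bipartite graph with left side $\Gamma\times\{1,2\}$ (two copies of each vertex) and right side $\Gamma$, with edges at distance $\le R$, to obtain a bounded-displacement injection $\Gamma\times\{1,2\}\hookrightarrow\Gamma$. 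This yields a bounded $2$-to-$1$ self-map $\rho$ of $\Gamma$, and summing geodesic $1$-chains from each $w$ to $\rho(w)$ gives a bounded $1$-chain $\phi$ with $\partial\phi=\sum_v\bigl(|\rho^{-1}(v)|-1\bigr)v=[\Gamma]$. One then upgrades from $[\Gamma]=0$ to $H_0^{uf}=0$ by decomposing a general bounded nonnegative chain using iterated expansion (choosing $k$-to-$1$ maps for $k$ larger than a coefficient bound). Your Rado-selection and bounded-geometry points about controlling multiplicities on the resulting $1$-chain are apt, but they attach to the wrong bipartite graph; as written, the non-amenable direction of your argument does not produce a primitive for the fundamental class.
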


Here is our main result.

\begin{thm}\label{main}
There is a closed rationally essential $n$-manifold $M$, $n\ge 5$,
with the fundamental group $\pi_1(M)=\Z^n\times F_2$ such that
$\dim_{MC}\Wi M<n$.
\end{thm}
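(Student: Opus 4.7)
The plan is to apply Corollary~\ref{smallhom} with $\pi=\Z^n\times F_2$. By K\"unneth, $H_n(B\pi;\Q)$ contains the rationally nonzero class $\alpha=[T^n]\otimes 1\in H_n(\Z^n)\otimes H_0(F_2)$, so once $\alpha=f_*([M])$ is realized by a closed oriented $n$-manifold $M$ with $\pi_1(M)=\pi$, the manifold $M$ is automatically rationally essential. Thus the task splits into (i) constructing such an $M$ by surgery and (ii) verifying $\alpha\in\ker(pert_*^{\pi})$.

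For (i), I would start from $M_0=T^n\#(S^1\times S^{n-1})^{\#2}$, a closed oriented $n$-manifold with $\pi_1(M_0)=\Z^n*F_2$. Its classifying map sends $[M_0]$ to $[T^n]\in H_n(\Z^n)\subset H_n(\Z^n*F_2)$, since the $S^1\times S^{n-1}$ summands contribute zero because $H_n(B\Z)=0$ for $n\ge 2$. Composing with the quotient $B(\Z^n*F_2)\to B(\Z^n\times F_2)$ sends $[M_0]$ to $\alpha$. To match the fundamental group, perform $2n$ two-surgeries on embedded loops representing the commutators $[g_i,h_j]$, where $g_i$ generate $\Z^n$ and $h_j$ generate $F_2$. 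Each such commutator is null-homotopic in $B\pi$, so the classifying map extends across the attached $2$-handles and the resulting cobordism gives $f_*([M])=\alpha$. For $n\ge 5$ these $2$-surgeries leave $H_n$ intact and yield $\pi_1(M)=\Z^n\times F_2$.

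For (ii), I would show that the target group $H^{lf,ae}_n(E\pi;\Z)$ vanishes outright, making $pert_*^{\pi}$ trivially zero. By the identification $H^{lf,ae}_*(\Wi K;\Z)=H^{uf}_*(\Wi K;\Z)$ in the remark preceding Theorem~\ref{BW}, this group coincides with $H^{uf}_n(\R^n\times T;\Z)$, where $T$ is the Cayley tree of $F_2$. First, $H^{uf}_*(T;\Z)=0$ in every degree: degree zero by Theorem~\ref{BW} since $F_2$ is non-amenable; degree one because a tree admits no $1$-cycles; higher degrees for dimensional reasons. A K\"unneth-type principle for uniformly finite chains on bounded-geometry products -- an Eilenberg--Zilber chain equivalence $C^{uf}_*(\R^n)\otimes C^{uf}_*(T)\simeq C^{uf}_*(\R^n\times T)$, combined with flatness over $\Z$ -- then propagates the vanishing to $H^{uf}_*(\R^n\times T;\Z)=0$. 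Since $\alpha\in RH_n(\pi)$ by (i), Corollary~\ref{smallhom} yields $\dim_{MC}\Wi M<n$.

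The main obstacle is justifying the K\"unneth step at the level of uniformly finite chains, since the usual Eilenberg--Zilber argument must respect bounded geometry. A direct fallback is to use a Block--Weinberger witness $c\in C^{uf}_1(T)$ with $\partial c=\mathbf{1}_{T^{(0)}}$ (existing precisely because $H^{uf}_0(T)=0$) and to build, by cross product with cellular chains of $\R^n$, explicit bounded null-homologies of arbitrary uniformly finite $n$-cycles in $\R^n\times T$; this avoids any general tensor-product machinery and isolates non-amenability as the sole engine of the vanishing. The secondary concern -- that the surgery step in (i) preserves $\alpha$ -- is routine once one observes that the attached $2$-handles map into chosen null-homotopies of the commutator loops in $B\pi$, so $M_0$ and $M$ are cobordant over $B\pi$.
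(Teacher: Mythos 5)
Your overall strategy is the paper's: realize $\alpha=[T^n]\otimes 1\in H_n(B\pi)$ by a manifold $M$ via surgery, and show that $pert_*^{\pi}(\alpha)=0$ using Block--Weinberger non-amenability of $F_2$ combined with a cross-product argument on uniformly finite / almost equivariant chains. Part (i) (the surgery construction starting from $T^n\#(S^1\times S^{n-1})^{\#2}$ and killing the commutators $[g_i,h_j]$) is just a more explicit version of the paper's one-line ``by a surgery in dimension $1$ and $2$ performed on $T^n$'' and is fine for $n\ge 5$.

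The real divergence is in part (ii), and there you are aiming at a stronger and more delicate statement than you need. You try to prove that the entire group $H^{lf,ae}_n(E\pi;\Z)=H^{uf}_n(\R^n\times T;\Z)$ vanishes, either by a K\"unneth/Eilenberg--Zilber argument for uniformly finite chains, or by producing ``explicit bounded null-homologies of arbitrary uniformly finite $n$-cycles in $\R^n\times T$.'' Both versions run into the same obstruction: $C^{uf}_*(\R^n\times T)$ is \emph{not} the tensor product $C^{uf}_*(\R^n)\otimes C^{uf}_*(T)$. A uniformly finite $n$-chain on the product need not decompose as a finite sum of cross products of uniformly finite chains on the factors (the coefficient data can vary over the tree $T$ in a bounded but arbitrary way that has no finite tensor decomposition), so the EZ/AW chain equivalence does not hand you the vanishing, and the cross-product null-homology with a BW witness $c$ only kills cycles that are \emph{already} of product form $z\times\mathbf{1}_{T^{(0)}}$, not arbitrary ones. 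You correctly flag this as the main obstacle; the point is that it is not merely hard to close, it is unnecessary.

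The paper sidesteps it with Proposition~\ref{tensor}, which asserts only that the homology cross product commutes with the perturbation maps, i.e.\ that the diagram
$$\begin{CD}
H_n(T^n)\otimes H_0(BF_2) @>\phi_*>> H_n(B\pi)\\
@V{pert_*^{\Z^n}\otimes pert_*^{F_2}}VV @V{pert_*^{\pi}}VV\\
H_n^{lf,ae}(\R^n)\otimes H_0^{lf,ae}(T) @>\phi_*^{ae}>> H_n^{lf,ae}(\R^n\times T)
\end{CD}$$
commutes. Since $\alpha$ lies in the image of the top $\phi_*$ and $pert_*^{F_2}(1)=0$ by Block--Weinberger, commutativity alone gives $pert_*^{\pi}(\alpha)=\phi_*^{ae}\bigl(pert_*^{\Z^n}([T^n])\otimes 0\bigr)=0$. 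This is exactly your ``fallback'' restricted to the one cycle that matters: if $z$ is the uniformly finite fundamental cycle of $\R^n$ and $c\in C^{uf}_1(T)$ satisfies $\partial c=\mathbf{1}_{T^{(0)}}$, then $z\times c$ is uniformly finite and $\partial(z\times c)=\pm\,z\times\mathbf{1}_{T^{(0)}}$, which is precisely the cycle representing $pert_*^{\pi}(\alpha)$. No statement about arbitrary uniformly finite $n$-cycles in $\R^n\times T$ is needed, and none should be attempted. With that restriction your argument is correct and coincides with the paper's.
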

\begin{proof}
We note that $B\pi=T^n\times(S^1\vee S^1)$ for $\pi=\pi_1(M)$ where
$T^n$ is the $n$-torus. Consider the natural inclusion of $T^n$ into
$B\pi$. Then the image of the fundamental class  $[T^n]$ in
$H_n(B\pi)$ is $[T^n]\otimes 1$ where $1\in H_0(S^1\vee S^1)$. By
Remark 1 and Theorem~\ref{BW}, $pert_*^{F_2}(1)=0$. Therefore, in
view of Proposition~\ref{tensor}
$$pert_*^{\pi}([T^n]\otimes 1)= pert^{\Z^n}_*([T^n])\otimes
pert_*^{F_2}(1)=0.$$ By a surgery in dimension 1 and 2 performed on
the torus $T^n$ we can obtain a manifold $M$ together with a map
$f:M\to B\pi$ inducing isomorphism of the fundamental groups and
such that $f([M])=[T^n]\otimes 1$. By Theorem~\ref{obstr-dim}
$\dim_{MC}\Wi M<n$.
\end{proof}
REMARK 2. The free group $F_2$ in the Theorem~\ref{main} can be
replaced by any non-amenable group.


\begin{thebibliography}{[Gr1]}

\bibitem[Ba]{Ba}
A. Bartels, {\em Squeezing and higher algebraic K-theory}, K-theory
vol 28 (2003), 19-37.

\bibitem[BR]{BR} A. Bartels, D. Rosenthal, {\em On the K -theory of groups with finite
asymptotic dimension}, J. Reine Angew. Math. 612 (2007) 35–57.

\bibitem[BW]{BW}
J. Block, S. Weinberger
\newblock {Aperiodic tilings, positive scalar curvature and amenability
of spaces}, J. Amer. Math. Soc.
{\bf 5} no. 4 (1992), 907-921.



\bibitem[B1]{B1}
    D. Bolotov
\newblock {\em Macroscopic dimension of $3$-Manifolds},
\newblock {Math. Physics, Analysis and Geometry
 {\bf 6} (2003), 291 - 299}.

\bibitem[B2]{B2}
    D. Bolotov
\newblock {\em Gromov's  macroscopic dimension conjecture },
\newblock {Algebraic and Geometric Topology
 {\bf 6} (2006), 1669 - 1676}.

\bibitem[B3]{B3}
D. Bolotov, {\em Macroscopic dimension of certain PSC-manifolds},
AGT {\bf 9}, (2009) 31-27.

\bibitem[BD]{BD}
D. Bolotov, A. Dranishnikov
\newblock  {\em On Gromov's scalar curvature conjecture},
Proc. AMS, {\bf 138} no. 4 (2010), 1517-1524,  \newblock {Preprint
arXiv:0901.4503v1 [math.GT]} 2009.

\bibitem[Br]{Br}
K. Brown
\newblock {Cohomology of groups},
\emph{Graduate Texts in Mathematics}, \textbf{87}
Springer, New York Heidelberg Berlin, 1994.

\bibitem[BH]{BH}
M. Brunnbauer, B. Hanke
\newblock {\em Large and small group homology},
\newblock {Preprint, ArXiv:0902.0869v} 2009.

\bibitem[CG]{CG} G. Carlsson and B. Goldfarb {\em The integral K-theoretic
Novikov conjecture for groups with finite asymptotic dimension}
Invent. Math. (2004) vol 157 No 2, 405--418.

\bibitem[Dr]{Dr}
A. Dranishnikov, {\em Macroscopic dimension and essential
manifolds}. Proceedings of the conference dedicated to 75th
anniversary of Steklov Mathematical Institute held in Moscow in
2009, to appear.


\bibitem[DFW]{DFW}
A. Dranishnikov, S. Ferry, and  S. Weinberger, {\em An Etale
approach to the Novikov conjecture}, Pure Appl. Math. 61 (2008), no.
2, 139-155.

\bibitem[DR]{DR}
A. Dranishnikov, Yu. Rudyak, {\em On the Berstein-\v Svarc Theorem
in dimension 2}. Math. Proc. Cambridge Phil. Soc. {\bf 146} (2009),
407-413.




\bibitem [Gr1]{Gr1}
    M. Gromov,
\newblock  {\em Positive curvature, macroscopic dimension, spectral
gaps and higher signatures}, Functional analysis on the eve of the
21st century. Vol II, Birhauser, Boston, MA, 1996.

\bibitem [Gr2]{Gr2}
    M. Gromov,
\newblock {\em Filling Riemannian manifolds}
\newblock {J. Differential Geometry}
{\bf 18} (1983), 1 - 147.

\bibitem[Gr3]{Gr3}
M. Gromov,
\newblock {Asymptotic invariants of infinite groups},
Cambridge University Press, Geometric Group Theory, vol 2 (1993).



\bibitem[Sv]{Sv} \v Svarc, A.: The genus of a fibered space. {\em Trudy
Moskov. Mat. Ob\v s\v c} \textbf {10, 11} (1961 and 1962), 217--272,
99--126, (in {\em Amer. Math. Soc. Transl.} Series 2, vol
\textbf{55} (1966)).

\bibitem[Yu]{Yu} G. Yu {\em The Novikov conjecture for groups with finite
asymptotic dimension}, Ann. of Math vol 147, (1998) no. 2,
325-355.

\end{thebibliography}
\end{document}